\newtheorem{theorem}{Theorem}
\theoremstyle{plain}
\newtheorem{corollary}{Corollary}
\newtheorem{lemma}{Lemma}
\newtheorem{proposition}{Proposition}
\newtheorem{remark}{Remark}
\numberwithin{equation}{section}
\begin{document}
\title[Integral inequalities]{A new generalization of some integral
inequalities and their applications}
\author{\.{I}mdat \.{I}\c{s}can}
\address{Department of Mathematics, Faculty of Arts and Sciences,\\
Giresun University, 28100, Giresun, Turkey.}
\email{imdat.iscan@giresun.edu.tr}
\date{June 10, 2012}
\subjclass[2000]{26A51, 26D15}
\keywords{convex function, Simpson's inequality, Hermite-Hadamard's
inequality, midpoint inequality, trapezoid ineqaulity.}

\begin{abstract}
In this paper, a new identity for convex functions is derived. A consequence
of the identity is that we can derive new estimates for the remainder term
of the midpoint, trapezoid, and Simpson formulae for functions whose
derivatives in absolute value at certain power are convex. Some applications
to special means of real numbers are also given.
\end{abstract}

\maketitle

\section{Introduction}

Let $f:I\subseteq \mathbb{R\rightarrow R}$ be a convex function defined on
the interval $I$ of real numbers and $a,b\in I$ with $a<b$. The following
inequality%
\begin{equation}
f\left( \frac{a+b}{2}\right) \leq \frac{1}{b-a}\dint\limits_{a}^{b}f(x)dx%
\leq \frac{f(a)+f(b)}{2}\text{.}  \label{1-1}
\end{equation}

holds. This double inequality is known in the literature as Hermite-Hadamard
integral inequality for convex functions. See \cite{ADK11,DP00,K04,THH12},
the results of the generalization, improvement and extention of the famous
integral inequality (\ref{1-1}).

The following inequality is well known in the literature as Simpson's
inequality .

Let $f:\left[ a,b\right] \mathbb{\rightarrow R}$ be a four times
continuously differentiable mapping on $\left( a,b\right) $ and $\left\Vert
f^{(4)}\right\Vert _{\infty }=\underset{x\in \left( a,b\right) }{\sup }%
\left\vert f^{(4)}(x)\right\vert <\infty .$ Then the following inequality
holds:%
\begin{equation*}
\left\vert \frac{1}{3}\left[ \frac{f(a)+f(b)}{2}+2f\left( \frac{a+b}{2}%
\right) \right] -\frac{1}{b-a}\dint\limits_{a}^{b}f(x)dx\right\vert \leq 
\frac{1}{2880}\left\Vert f^{(4)}\right\Vert _{\infty }\left( b-a\right) ^{2}.
\end{equation*}

\bigskip In recent years many authors have studied error estimations for
Simpson's inequality; for refinements, counterparts, generalizations and new
Simpson's type inequalities, see \cite{ADD09,SA11,SSO10,SSO10a}

In \cite{SSO10a}, Sarikaya et al. obtained inequalities for differentiable
convex mapping which are connected Simpson's inequality, and they used the
following lemma to prove this.

\begin{lemma}
Let $f:I\subset \mathbb{R\rightarrow R}$ be an absolutely continuous mapping
on $I^{\circ }$ such that $f^{\prime }\in L[a,b]$, where $a,b\in I^{\circ }$
with $a<b$. Then the following equality holds:%
\begin{eqnarray*}
&&\frac{1}{6}\left[ f(a)+4f\left( \frac{a+b}{2}\right) +f(b)\right] -\frac{1%
}{b-a}\dint\limits_{a}^{b}f(x)dx \\
&=&\frac{b-a}{2}\dint\limits_{0}^{1}\left[ \left( \frac{t}{2}-\frac{1}{3}%
\right) f^{\prime }\left( \frac{1+t}{2}b+\frac{1-t}{2}a\right) +\left( \frac{%
1}{3}-\frac{t}{2}\right) f^{\prime }\left( \frac{1+t}{2}a+\frac{1-t}{2}%
b\right) \right] dt.
\end{eqnarray*}
\end{lemma}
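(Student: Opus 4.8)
The plan is to work from the right-hand side toward the left, using integration by parts, since the integrand on the right is built from $f'$ while the left-hand side involves point values of $f$ together with the integral of $f$. Write the right-hand side as $\frac{b-a}{2}(I_1+I_2)$, where $I_1$ is the integral of $\left(\frac{t}{2}-\frac{1}{3}\right)f'\left(\frac{1+t}{2}b+\frac{1-t}{2}a\right)$ over $[0,1]$ and $I_2$ is the integral of $\left(\frac{1}{3}-\frac{t}{2}\right)f'\left(\frac{1+t}{2}a+\frac{1-t}{2}b\right)$. The key observation is that the argument of $f'$ in $I_1$, namely $\frac{a+b}{2}+\frac{t}{2}(b-a)$, is affine in $t$ with slope $\frac{b-a}{2}$, while the argument in $I_2$, namely $\frac{a+b}{2}-\frac{t}{2}(b-a)$, has slope $-\frac{b-a}{2}$. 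This is what makes the antiderivatives explicit.

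First I would treat $I_1$. Taking $u=\frac{t}{2}-\frac{1}{3}$ and noting that an antiderivative of $f'\left(\frac{1+t}{2}b+\frac{1-t}{2}a\right)$ in $t$ is $\frac{2}{b-a}f\left(\frac{1+t}{2}b+\frac{1-t}{2}a\right)$, integration by parts yields a boundary term evaluated at $t=0,1$ plus a remaining integral of $f$ itself. The weight $\frac{t}{2}-\frac{1}{3}$ equals $\frac{1}{6}$ at $t=1$ (attached to $f(b)$) and $-\frac{1}{3}$ at $t=0$ (attached to $f\left(\frac{a+b}{2}\right)$). The leftover integral I would convert, via the substitution $x=\frac{a+b}{2}+\frac{t}{2}(b-a)$, into $\frac{2}{b-a}\int_{(a+b)/2}^{b}f(x)\,dx$.

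Next I would repeat the computation for $I_2$, which is entirely analogous except that the argument of $f'$ now decreases in $t$, so the antiderivative is $-\frac{2}{b-a}f\left(\frac{1+t}{2}a+\frac{1-t}{2}b\right)$ and the substitution $x=\frac{a+b}{2}-\frac{t}{2}(b-a)$ runs from $\frac{a+b}{2}$ down to $a$. After the two resulting minus signs cancel, the boundary term contributes $f(a)$ and $f\left(\frac{a+b}{2}\right)$, and the leftover integral becomes $\frac{2}{b-a}\int_{a}^{(a+b)/2}f(x)\,dx$. Absolute continuity of $f$ is precisely what justifies both the integration by parts and the fundamental theorem of calculus invoked in these substitutions.

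Finally I would add $I_1$ and $I_2$. The two half-interval integrals combine into $\int_a^b f(x)\,dx$, the two midpoint contributions add to give coefficient $4$ on $f\left(\frac{a+b}{2}\right)$, and the endpoint contributions give $f(a)+f(b)$, each carrying the factor $\frac{1}{3(b-a)}$. Multiplying through by $\frac{b-a}{2}$ and collecting constants then reproduces exactly $\frac{1}{6}\left[f(a)+4f\left(\frac{a+b}{2}\right)+f(b)\right]-\frac{1}{b-a}\int_a^b f(x)\,dx$. The only genuine difficulty is the bookkeeping of signs and of the factor $\frac{2}{b-a}$ in $I_2$, where the decreasing argument is the natural place to introduce an erroneous sign; everything else is routine.
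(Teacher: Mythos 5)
Your proof is correct and follows essentially the same route as the paper: the paper itself quotes this lemma from \cite{SSO10a} without proof, but its own Lemma \ref{2.1} --- of which this identity is exactly the case $\alpha =\frac{1}{2}$, $\lambda =\frac{1}{3}$ after the substitutions $s=\frac{1+t}{2}$ and $s=\frac{1-t}{2}$ --- is proved by precisely your argument of integrating each kernel piece by parts and then changing variables $x=tb+(1-t)a$ to recover $\frac{1}{b-a}\dint\limits_{a}^{b}f(x)dx$. Your bookkeeping checks out: the antiderivative $-\frac{2}{b-a}f\left( \frac{1+t}{2}a+\frac{1-t}{2}b\right) $ for the decreasing argument carries the correct sign, and the boundary weights $\frac{1}{6}$ and $-\frac{1}{3}$ combine, after multiplying by $\frac{b-a}{2}$, to give the coefficients $1$, $4$, $1$ as claimed.
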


The main inequality in \cite{SSO10a}, pointed out, is as follows.

\begin{theorem}
Let $f:I\subset \mathbb{R\rightarrow R}$ be a differentiable mapping on $%
I^{\circ }$, such that $f^{\prime }\in L[a,b]$, where $a,b\in I^{\circ }$
with $a<b$. If \ If $\left\vert f^{\prime }\right\vert ^{q}$ is convex on $%
[a,b],\ $ $q>1,$ then the following inequality holds,%
\begin{eqnarray}
&&\left\vert \frac{1}{6}\left[ f(a)+4f\left( \frac{a+b}{2}\right) +f(b)%
\right] -\frac{1}{b-a}\dint\limits_{a}^{b}f(x)dx\right\vert  \label{1-2a} \\
&\leq &\frac{b-a}{12}\left( \frac{1+2^{p+1}}{3\left( p+1\right) }\right) ^{%
\frac{1}{p}}\left\{ \left( \frac{3\left\vert f^{\prime }(b)\right\vert
^{q}+\left\vert f^{\prime }\left( a\right) \right\vert ^{q}}{4}\right) ^{%
\frac{1}{q}}+\left( \frac{3\left\vert f^{\prime }(a)\right\vert
^{q}+\left\vert f^{\prime }\left( b\right) \right\vert ^{q}}{4}\right) ^{%
\frac{1}{q}}\right\} ,  \notag
\end{eqnarray}%
where $\frac{1}{p}+\frac{1}{q}=1.$
\end{theorem}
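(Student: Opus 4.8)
The plan is to begin from the identity proved in the Lemma, which rewrites the left-hand side as $\frac{b-a}{2}\int_{0}^{1}\left[\cdots\right]\,dt$, and then to estimate that integral. First I would take absolute values and apply the triangle inequality. Since $\left\vert\frac{t}{2}-\frac{1}{3}\right\vert=\left\vert\frac{1}{3}-\frac{t}{2}\right\vert$, the two kernel factors coincide, so the quantity is bounded by a sum of two integrals of the shape $\int_{0}^{1}\left\vert\frac{t}{2}-\frac{1}{3}\right\vert\,\left\vert f^{\prime}(u(t))\right\vert\,dt$, one with $u(t)=\frac{1+t}{2}b+\frac{1-t}{2}a$ and the other with $u(t)=\frac{1+t}{2}a+\frac{1-t}{2}b$.

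Next, to each of these integrals I would apply H\"{o}lder's inequality with exponents $p$ and $q$, separating the kernel from the derivative:
\begin{equation*}
\int_{0}^{1}\left\vert\frac{t}{2}-\frac{1}{3}\right\vert\left\vert f^{\prime}(u(t))\right\vert dt\leq\left(\int_{0}^{1}\left\vert\frac{t}{2}-\frac{1}{3}\right\vert^{p}dt\right)^{\frac{1}{p}}\left(\int_{0}^{1}\left\vert f^{\prime}(u(t))\right\vert^{q}dt\right)^{\frac{1}{q}}.
\end{equation*}
The key computation is the kernel norm $\int_{0}^{1}\left\vert\frac{t}{2}-\frac{1}{3}\right\vert^{p}dt$. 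Because $\frac{t}{2}-\frac{1}{3}$ changes sign at $t=\frac{2}{3}$, I would split the integral there, integrate the two resulting powers separately, and collect the terms; this produces $\frac{1+2^{p+1}}{3(p+1)6^{p}}$, whose $\frac{1}{p}$-th power equals $\frac{1}{6}\left(\frac{1+2^{p+1}}{3(p+1)}\right)^{\frac{1}{p}}$. Multiplied by the prefactor $\frac{b-a}{2}$ this gives precisely the constant $\frac{b-a}{12}\left(\frac{1+2^{p+1}}{3(p+1)}\right)^{\frac{1}{p}}$ appearing in the statement.

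Finally, for the two $L^{q}$ factors I would use the convexity of $\left\vert f^{\prime}\right\vert^{q}$. Since $u(t)=\frac{1+t}{2}b+\frac{1-t}{2}a$ is a convex combination of the endpoints with weights $\frac{1+t}{2}$ and $\frac{1-t}{2}$, convexity yields $\left\vert f^{\prime}(u(t))\right\vert^{q}\leq\frac{1+t}{2}\left\vert f^{\prime}(b)\right\vert^{q}+\frac{1-t}{2}\left\vert f^{\prime}(a)\right\vert^{q}$; integrating over $[0,1]$ with $\int_{0}^{1}\frac{1+t}{2}dt=\frac{3}{4}$ and $\int_{0}^{1}\frac{1-t}{2}dt=\frac{1}{4}$ bounds the integral by $\frac{3\left\vert f^{\prime}(b)\right\vert^{q}+\left\vert f^{\prime}(a)\right\vert^{q}}{4}$, and the symmetric argument handles the other term. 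Taking $\frac{1}{q}$-th powers and summing then reproduces the bracketed expression, which completes the proof. I expect the kernel-norm computation to be the only real obstacle, since the sign change at $t=\frac{2}{3}$ must be tracked carefully to arrive at the exact constant $\frac{1+2^{p+1}}{3(p+1)}$; the remaining steps are routine applications of H\"{o}lder's and the convexity inequality.
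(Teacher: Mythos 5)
Your proof is correct and takes essentially the same route as the paper: the paper obtains this inequality by specializing its general identity (Lemma \ref{2.1}) together with the H\"{o}lder-plus-convexity estimate of Theorem \ref{2.4} to $\alpha=\frac{1}{2}$, $\lambda=\frac{1}{3}$, and under the substitution $s=\frac{1\pm t}{2}$ that specialization is exactly your computation with the two-kernel lemma from the introduction. Your constants all check out, including the kernel norm $\int_{0}^{1}\left\vert \frac{t}{2}-\frac{1}{3}\right\vert ^{p}dt=\frac{1+2^{p+1}}{3\left( p+1\right) 6^{p}}$ and the convexity weights $\frac{3}{4}$ and $\frac{1}{4}$.
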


In \cite{SSO10}, Sarikaya et al. obtained a new upper bound for the
right-hand side of Simpson's inequality for convex mapping:

\begin{corollary}
Let $f:I\subset \left[ 0,\infty \right) \mathbb{\rightarrow R}$ be a
differentiable mapping on $I^{\circ }$, such that $f^{\prime }\in L[a,b]$,
where $a,b\in I^{\circ }$ with $a<b$. If \ If $\left\vert f^{\prime
}\right\vert ^{q}$ is convex on $[a,b],\ $ $q>1,$ then the following
inequality holds,%
\begin{equation}
\left\vert \frac{1}{6}\left[ f(a)+4f\left( \frac{a+b}{2}\right) +f(b)\right]
-\frac{1}{b-a}\dint\limits_{a}^{b}f(x)dx\right\vert  \label{1-2}
\end{equation}%
\begin{equation*}
\leq \frac{b-a}{12}\left( \frac{1+2^{p+1}}{3\left( p+1\right) }\right) ^{%
\frac{1}{p}}\left\{ \left( \frac{\left\vert f^{\prime }(b)\right\vert
^{q}+\left\vert f^{\prime }\left( \frac{a+b}{2}\right) \right\vert ^{q}}{2}%
\right) ^{\frac{1}{q}}+\left( \frac{\left\vert f^{\prime }(a)\right\vert
^{q}+\left\vert f^{\prime }\left( \frac{a+b}{2}\right) \right\vert ^{q}}{2}%
\right) ^{\frac{1}{q}}\right\} ,
\end{equation*}%
where $\frac{1}{p}+\frac{1}{q}=1.$
\end{corollary}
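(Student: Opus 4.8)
The plan is to derive this bound directly from the identity in the Lemma, the only real ingredient being a careful choice of convex combination. I would begin by taking absolute values in the Lemma's identity and applying the triangle inequality inside the integral, splitting the right-hand side into
\[
\frac{b-a}{2}\int_0^1 \left|\frac{t}{2}-\frac{1}{3}\right|\left|f'\left(\frac{1+t}{2}b+\frac{1-t}{2}a\right)\right|dt
+\frac{b-a}{2}\int_0^1 \left|\frac{1}{3}-\frac{t}{2}\right|\left|f'\left(\frac{1+t}{2}a+\frac{1-t}{2}b\right)\right|dt .
\]
To each of these two integrals I would apply H\"older's inequality with exponents $p$ and $q$, pulling the weight $|\frac{t}{2}-\frac{1}{3}|$ into an $L^p$ factor and the derivative into an $L^q$ factor.

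The first computational step is the weight integral $\int_0^1|\frac{t}{2}-\frac{1}{3}|^p\,dt$. Since $\frac{t}{2}-\frac{1}{3}$ changes sign at $t=\frac{2}{3}$, I would split the integral there and evaluate the two pieces by the substitutions $u=\frac{1}{3}-\frac{t}{2}$ and $v=\frac{t}{2}-\frac{1}{3}$; this yields $\frac{2}{(p+1)3^{p+1}}+\frac{2}{(p+1)6^{p+1}}=\frac{1+2^{p+1}}{3(p+1)6^{p}}$. Taking the $p$-th root produces the factor $\frac{1}{6}\left(\frac{1+2^{p+1}}{3(p+1)}\right)^{1/p}$, which together with the prefactor $\frac{b-a}{2}$ gives exactly $\frac{b-a}{12}\left(\frac{1+2^{p+1}}{3(p+1)}\right)^{1/p}$.

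The decisive step, and the one that distinguishes this bound from the one in Theorem (\ref{1-2a}), is how convexity of $|f'|^q$ is used on the $L^q$ factors. Rather than regarding $\frac{1+t}{2}b+\frac{1-t}{2}a$ as a convex combination of $a$ and $b$, I would observe that as $t$ runs over $[0,1]$ this point stays in $[\frac{a+b}{2},b]$, and in fact
\[
\frac{1+t}{2}b+\frac{1-t}{2}a = t\,b+(1-t)\frac{a+b}{2}.
\]
Convexity of $|f'|^q$ then gives $|f'(\,\cdots)|^q\leq t|f'(b)|^q+(1-t)|f'(\frac{a+b}{2})|^q$, and integrating over $[0,1]$ (using $\int_0^1 t\,dt=\int_0^1(1-t)\,dt=\frac12$) bounds the first $L^q$ factor by $\frac12\left(|f'(b)|^q+|f'(\frac{a+b}{2})|^q\right)$. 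The symmetric identity $\frac{1+t}{2}a+\frac{1-t}{2}b=t\,a+(1-t)\frac{a+b}{2}$ handles the second integral in the same way, producing $\frac12\left(|f'(a)|^q+|f'(\frac{a+b}{2})|^q\right)$, and assembling the two pieces gives the claimed inequality.

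I do not anticipate a serious obstacle: once the endpoint-to-midpoint representation is spotted, everything reduces to elementary integrals. The only points requiring care are the correct handling of the sign change at $t=\frac23$ in the weight integral, and the recognition that it is precisely the endpoint-to-midpoint representation, rather than the endpoint-to-endpoint combination used for Theorem (\ref{1-2a}), that makes the midpoint value $f'(\frac{a+b}{2})$ appear with the symmetric weights $\frac12,\frac12$ instead of $\frac34,\frac14$.
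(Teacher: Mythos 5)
Your proof is correct and is essentially the paper's own argument: the paper obtains (\ref{1-2}) by specializing Theorem \ref{2.3} with $\alpha =\frac{1}{2}$, $\lambda =\frac{1}{3}$, whose proof likewise starts from an identity splitting $[a,b]$ at the midpoint (Lemma \ref{2.1}), applies H\"{o}lder's inequality with the weight in the $L^{p}$ factor, and bounds the $L^{q}$ average of $\left\vert f^{\prime }\right\vert ^{q}$ over each half-interval by the right half of the Hermite--Hadamard inequality (\ref{1-1}), which is exactly your $\frac{1}{2}\left( \left\vert f^{\prime }(b)\right\vert ^{q}+\left\vert f^{\prime }\left( \frac{a+b}{2}\right) \right\vert ^{q}\right) $ bound with the weights $\frac{1}{2},\frac{1}{2}$. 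Your only cosmetic deviations are that you start from the Sarikaya et al.\ identity, which coincides with Lemma \ref{2.1} at these parameter values under the substitution $t\mapsto \frac{1\pm t}{2}$, and that you re-derive the Hermite--Hadamard upper bound inline via the representation $\frac{1+t}{2}b+\frac{1-t}{2}a=tb+(1-t)\frac{a+b}{2}$ instead of quoting (\ref{1-1}), the two being the same computation.
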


In \cite{K04}, some inequalities of Hermite-Hadamard type for differentiable
convex mappings were presented as follows.

\begin{theorem}
Let $f:I\subset \mathbb{R\rightarrow R}$ be a differentiable mapping on $%
I^{\circ }$, $a,b\in I^{\circ }$ with $a<b$. If \ If $\left\vert f^{\prime
}\right\vert $ is convex on $[a,b],$ then the following inequality holds,%
\begin{equation}
\left\vert f\left( \frac{a+b}{2}\right) -\frac{1}{b-a}\dint%
\limits_{a}^{b}f(x)dx\right\vert \leq \frac{b-a}{4}\left( \frac{\left\vert
f^{\prime }(a)\right\vert +\left\vert f^{\prime }(b)\right\vert }{2}\right) .
\label{1-3}
\end{equation}
\end{theorem}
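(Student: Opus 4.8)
The plan is to reduce the left-hand side of (\ref{1-3}) to a single integral of $f'$ against a piecewise-linear kernel, and then bound that integral using the convexity of $\left\vert f'\right\vert$. This mirrors the strategy behind Lemma~1 and the Simpson-type results above, but with the kernel adapted to the midpoint functional.

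First I would establish the midpoint identity
\[
f\left(\frac{a+b}{2}\right)-\frac{1}{b-a}\int_a^b f(x)\,dx
= (b-a)\int_0^1 p(t)\,f'\left(ta+(1-t)b\right)\,dt,
\]
where $p(t)=t$ for $t\in[0,\tfrac12]$ and $p(t)=t-1$ for $t\in(\tfrac12,1]$. To prove it I would evaluate $\int_0^1 p(t)\,d\!\left[f(ta+(1-t)b)\right]$ by integration by parts separately on $[0,\tfrac12]$ and on $[\tfrac12,1]$. Since $p(0)=p(1)=0$, the contributions at the endpoints $t=0,1$ vanish, while the two boundary terms at $t=\tfrac12$ each equal $\tfrac12 f\!\left(\frac{a+b}{2}\right)$ and add up to $f\!\left(\frac{a+b}{2}\right)$. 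The interior terms $-\int p'(t)\,f\,dt$ combine (using $p'\equiv1$ on each piece) into $-\int_0^1 f(ta+(1-t)b)\,dt=-\frac{1}{b-a}\int_a^b f(x)\,dx$, the last equality coming from the substitution $x=ta+(1-t)b$. Because $d\!\left[f(ta+(1-t)b)\right]=-(b-a)\,f'(ta+(1-t)b)\,dt$, rearranging produces the stated identity.

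Second, I would take absolute values and pass the modulus inside the integral by the triangle inequality, obtaining
\[
\left\vert f\left(\frac{a+b}{2}\right)-\frac{1}{b-a}\int_a^b f(x)\,dx\right\vert
\le (b-a)\int_0^1 \left\vert p(t)\right\vert\,\left\vert f'(ta+(1-t)b)\right\vert\,dt.
\]
Since $\left\vert f'\right\vert$ is convex and $ta+(1-t)b$ is the convex combination placing weight $t$ on $a$ and $1-t$ on $b$, I would bound $\left\vert f'(ta+(1-t)b)\right\vert\le t\left\vert f'(a)\right\vert+(1-t)\left\vert f'(b)\right\vert$. Finally I would evaluate the two elementary integrals: with $\left\vert p(t)\right\vert=t$ on $[0,\tfrac12]$ and $\left\vert p(t)\right\vert=1-t$ on $[\tfrac12,1]$, a direct computation gives $\int_0^1 \left\vert p(t)\right\vert\,t\,dt=\int_0^1 \left\vert p(t)\right\vert(1-t)\,dt=\tfrac18$. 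Substituting these values yields the bound $\frac{b-a}{8}\left(\left\vert f'(a)\right\vert+\left\vert f'(b)\right\vert\right)$, which equals $\frac{b-a}{4}\cdot\frac{\left\vert f'(a)\right\vert+\left\vert f'(b)\right\vert}{2}$, proving (\ref{1-3}).

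The main obstacle is the identity itself: the kernel $p$ is discontinuous at $t=\tfrac12$, so the integration by parts must be split there and the boundary data tracked with care. It is precisely the jump of $p$ at $t=\tfrac12$ that converts the endpoint evaluations into the midpoint value $f\!\left(\frac{a+b}{2}\right)$; with a continuous kernel one would instead recover a trapezoid-type expression. Once the identity is secured, the convexity estimate and the integral evaluations are routine.
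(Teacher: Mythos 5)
Your proof is correct and essentially the same as the paper's: your identity is exactly Lemma \ref{2.1} specialized to $\alpha =\frac{1}{2}$, $\lambda =0$ (the paper then obtains (\ref{1-3}) from Theorem \ref{2.2} with $q=1$, via Corollary \ref{3} and (\ref{2-11})), and your convexity estimate and the kernel-integral values $\frac{1}{8}$ reproduce its computation. One small slip: with your parametrization one has $d\left[ f(ta+(1-t)b)\right] =-(b-a)f^{\prime }(ta+(1-t)b)\,dt$, so your identity as displayed is off by a sign (the paper's choice $tb+(1-t)a$ avoids this); since you immediately pass to absolute values, the final inequality is unaffected.
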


In this paper, in order to provide a unified approach to establish midpoint
inequality, trapezoid inequality and Simpson's inequality for functions
whose derivatives in absolute value at certain power are convex, we derive a
general integral identity for convex functions. Finally some applications
for special means of real numbers are provided.

\section{Main results}

In order to prove our main theorems, we need the following Lemma.

\begin{lemma}
\label{2.1}Let $f:I\subset \mathbb{R\rightarrow R}$ be a differentiable
mapping on $I^{\circ }$ such that $f^{\prime }\in L[a,b]$, where $a,b\in I$
with $a<b$ and $\alpha ,\lambda \in \left[ 0,1\right] $. Then the following
equality holds:%
\begin{eqnarray}
&&\lambda \left( \alpha f(a)+\left( 1-\alpha \right) f(b)\right) +\left(
1-\lambda \right) f(\alpha a+\left( 1-\alpha \right) b)-\frac{1}{b-a}%
\dint\limits_{a}^{b}f(x)dx  \label{2-1} \\
&=&\left( b-a\right) \left[ \dint\limits_{0}^{1-\alpha }\left( t-\alpha
\lambda \right) f^{\prime }\left( tb+(1-t)a\right) dt\right.  \notag \\
&&\left. +\dint\limits_{1-\alpha }^{1}\left( t-1+\lambda \left( 1-\alpha
\right) \right) f^{\prime }\left( tb+(1-t)a\right) dt\right] .  \notag
\end{eqnarray}
\end{lemma}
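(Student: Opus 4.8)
The plan is to start from the right-hand side of (\ref{2-1}) and reduce it to the left-hand side by integrating each piece by parts and then changing variables in the resulting integral of $f$. Write the bracketed expression on the right as $I_1+I_2$, where $I_1=\int_0^{1-\alpha}(t-\alpha\lambda)f'(tb+(1-t)a)\,dt$ and $I_2=\int_{1-\alpha}^1(t-1+\lambda(1-\alpha))f'(tb+(1-t)a)\,dt$. First I would integrate each of these by parts, taking the linear polynomial as the factor to be differentiated and $f'(tb+(1-t)a)\,dt$ as the factor to be integrated. Since $\frac{d}{dt}f(tb+(1-t)a)=(b-a)f'(tb+(1-t)a)$, an antiderivative of $f'(tb+(1-t)a)$ is $\frac{1}{b-a}f(tb+(1-t)a)$, and the derivative of each linear weight is simply $1$; hence each integration by parts produces a boundary term together with $-\frac{1}{b-a}$ times the integral of $f(tb+(1-t)a)$ over the respective subinterval.

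The decisive point, and the reason the weights take their particular form, lies in collecting the boundary terms. The key observation is that at the common endpoint $t=1-\alpha$ the argument $tb+(1-t)a$ equals exactly $\alpha a+(1-\alpha)b$, the interior evaluation point appearing on the left. I would record the four boundary contributions: from $I_1$, a term in $f(a)$ at $t=0$ and a term in $f(\alpha a+(1-\alpha)b)$ at $t=1-\alpha$; from $I_2$, a term in $f(b)$ at $t=1$ and a second term in $f(\alpha a+(1-\alpha)b)$ at $t=1-\alpha$. Adding these, the two contributions at $t=1-\alpha$ are arranged so that their coefficients sum to $(1-\lambda)$, while the endpoint contributions at $t=0$ and $t=1$ carry the coefficients $\alpha\lambda$ and $\lambda(1-\alpha)$, which together assemble $\lambda(\alpha f(a)+(1-\alpha)f(b))$.

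Finally, the two leftover integrals of $f(tb+(1-t)a)$ over $[0,1-\alpha]$ and $[1-\alpha,1]$ combine into the single integral $-\frac{1}{b-a}\int_0^1 f(tb+(1-t)a)\,dt$; the substitution $u=tb+(1-t)a$, for which $du=(b-a)\,dt$ and the limits become $a$ and $b$, turns this into $-\frac{1}{(b-a)^2}\int_a^b f(u)\,du$. Multiplying the resulting identity through by $(b-a)$ then reproduces exactly the left-hand side of (\ref{2-1}). The computation is otherwise routine; the only part demanding genuine care is the bookkeeping of the four boundary coefficients, since the particular shifts $-\alpha\lambda$ and $-1+\lambda(1-\alpha)$ in the two weights were engineered precisely so that these coefficients collapse to the desired convex combination.
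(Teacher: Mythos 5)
Your proposal is correct and follows essentially the same route as the paper's own proof: integration by parts on each of the two weighted integrals with antiderivative $\frac{1}{b-a}f(tb+(1-t)a)$, collection of the boundary terms (whose coefficients at $t=1-\alpha$ indeed sum to $1-\lambda$, and at $t=0$, $t=1$ give $\alpha\lambda$ and $\lambda(1-\alpha)$), followed by the substitution $x=tb+(1-t)a$ and multiplication by $b-a$. No gaps; the bookkeeping you flag as the delicate step is exactly the computation the paper carries out.
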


\begin{proof}
We note that%
\begin{equation*}
I=\dint\limits_{0}^{1-\alpha }\left( t-\alpha \lambda \right) f^{\prime
}\left( tb+(1-t)a\right) dt+\dint\limits_{1-\alpha }^{1}\left( t-1+\lambda
\left( 1-\alpha \right) \right) f^{\prime }\left( tb+(1-t)a\right) dt
\end{equation*}%
integrating by parts, we get%
\begin{eqnarray*}
I &=&\left. \left( t-\alpha \lambda \right) \frac{f\left( tb+(1-t)a\right) }{%
b-a}\right\vert _{0}^{1-\alpha }-\dint\limits_{0}^{1-\alpha }\frac{f\left(
tb+(1-t)a\right) }{b-a}dt \\
&&+\left. \left( t-1+\lambda \left( 1-\alpha \right) \right) \frac{f\left(
tb+(1-t)a\right) }{b-a}\right\vert _{1-\alpha }^{1}-\dint\limits_{1-\alpha
}^{1}\frac{f\left( tb+(1-t)a\right) }{b-a}dt \\
&=&\left( 1-\alpha -\alpha \lambda \right) \frac{f\left( \left( 1-\alpha
\right) b+\alpha a\right) }{b-a}+\frac{\alpha \lambda f(a)}{b-a}+\frac{%
\left( 1-\alpha \right) \lambda f(b)}{b-a} \\
&&-\left( -\alpha +\lambda \left( 1-\alpha \right) \right) \frac{f\left(
\left( 1-\alpha \right) b+\alpha a\right) }{b-a}-\dint\limits_{0}^{1}\frac{%
f\left( tb+(1-t)a\right) }{b-a}dt.
\end{eqnarray*}%
Setting $x=tb+(1-t)a,$ and $dx=\left( b-a\right) dt,$ we obtain%
\begin{equation*}
\left( b-a\right) I=\lambda \left( \alpha f(a)+\left( 1-\alpha \right)
f(b)\right) +\left( 1-\lambda \right) f(\alpha a+\left( 1-\alpha \right) b)-%
\frac{1}{b-a}\dint\limits_{a}^{b}f(x)dx
\end{equation*}%
which gives the desired representation (\ref{2-1}).
\end{proof}

\begin{theorem}
\label{2.2}Let $f:I\subset \mathbb{R\rightarrow R}$ be a differentiable
mapping on $I^{\circ }$ such that $f^{\prime }\in L[a,b]$, where $a,b\in
I^{\circ }$ with $a<b$ and $\alpha ,\lambda \in \left[ 0,1\right] $. If $%
\left\vert f^{\prime }\right\vert ^{q}$ is convex on $[a,b]$, $q\geq 1,$
then the following inequality holds:%
\begin{eqnarray}
&&\left\vert \lambda \left( \alpha f(a)+\left( 1-\alpha \right) f(b)\right)
+\left( 1-\lambda \right) f(\alpha a+\left( 1-\alpha \right) b)-\frac{1}{b-a}%
\dint\limits_{a}^{b}f(x)dx\right\vert  \label{2-2} \\
&\leq &\left\{ 
\begin{array}{cc}
\begin{array}{c}
\left( b-a\right) \left\{ \gamma _{2}^{1-\frac{1}{q}}\left( \mu
_{1}\left\vert f^{\prime }(b)\right\vert ^{q}+\mu _{2}\left\vert f^{\prime
}(a)\right\vert ^{q}\right) ^{\frac{1}{q}}\right. \\ 
\left. +\upsilon _{2}^{1-\frac{1}{q}}\left( \eta _{3}\left\vert f^{\prime
}(b)\right\vert ^{q}+\eta _{4}\left\vert f^{\prime }(a)\right\vert
^{q}\right) ^{\frac{1}{q}}\right\} ,%
\end{array}
& \alpha \lambda \leq 1-\alpha \leq 1-\lambda \left( 1-\alpha \right) \\ 
\begin{array}{c}
\left( b-a\right) \left\{ \gamma _{2}^{1-\frac{1}{q}}\left( \mu
_{1}\left\vert f^{\prime }(b)\right\vert ^{q}+\mu _{2}\left\vert f^{\prime
}(a)\right\vert ^{q}\right) ^{\frac{1}{q}}\right. \\ 
\left. +\upsilon _{1}^{1-\frac{1}{q}}\left( \eta _{1}\left\vert f^{\prime
}(b)\right\vert ^{q}+\eta _{2}\left\vert f^{\prime }(a)\right\vert
^{q}\right) ^{\frac{1}{q}}\right\} ,%
\end{array}
& \alpha \lambda \leq 1-\lambda \left( 1-\alpha \right) \leq 1-\alpha \\ 
\begin{array}{c}
\left( b-a\right) \left\{ \gamma _{1}^{1-\frac{1}{q}}\left( \mu
_{3}\left\vert f^{\prime }(b)\right\vert ^{q}+\mu _{4}\left\vert f^{\prime
}(a)\right\vert ^{q}\right) ^{\frac{1}{q}}\right. \\ 
\left. +\upsilon _{2}^{1-\frac{1}{q}}\left( \eta _{3}\left\vert f^{\prime
}(b)\right\vert ^{q}+\eta _{4}\left\vert f^{\prime }(a)\right\vert
^{q}\right) ^{\frac{1}{q}}\right\} ,%
\end{array}
& 1-\alpha \leq \alpha \lambda \leq 1-\lambda \left( 1-\alpha \right)%
\end{array}%
\right.  \notag
\end{eqnarray}%
where 
\begin{equation}
\gamma _{1}=\left( 1-\alpha \right) \left[ \alpha \lambda -\frac{\left(
1-\alpha \right) }{2}\right] ,\ \gamma _{2}=\left( \alpha \lambda \right)
^{2}-\gamma _{1}\ ,  \label{2-2a}
\end{equation}%
\begin{eqnarray}
\upsilon _{1} &=&\frac{1-\left( 1-\alpha \right) ^{2}}{2}-\alpha \left[
1-\lambda \left( 1-\alpha \right) \right] ,  \label{2-2b} \\
\upsilon _{2} &=&\frac{1+\left( 1-\alpha \right) ^{2}}{2}-\left( \lambda
+1\right) \left( 1-\alpha \right) \left[ 1-\lambda \left( 1-\alpha \right) %
\right] ,  \notag
\end{eqnarray}%
\begin{eqnarray}
\mu _{1} &=&\frac{\left( \alpha \lambda \right) ^{3}+\left( 1-\alpha \right)
^{3}}{3}-\alpha \lambda \frac{\left( 1-\alpha \right) ^{2}}{2},\ 
\label{2-2c} \\
\mu _{2} &=&\frac{1+\alpha ^{3}+\left( 1-\alpha \lambda \right) ^{3}}{3}-%
\frac{\left( 1-\alpha \lambda \right) }{2}\left( 1+\alpha ^{2}\right) , 
\notag \\
\mu _{3} &=&\alpha \lambda \frac{\left( 1-\alpha \right) ^{2}}{2}-\frac{%
\left( 1-\alpha \right) ^{3}}{3},  \notag \\
\mu _{4} &=&\frac{\left( \alpha \lambda -1\right) \left( 1-\alpha
^{2}\right) }{2}+\frac{1-\alpha ^{3}}{3},  \notag
\end{eqnarray}%
\begin{eqnarray}
\eta _{1} &=&\frac{1-\left( 1-\alpha \right) ^{3}}{3}-\frac{\left[ 1-\lambda
\left( 1-\alpha \right) \right] }{2}\alpha \left( 2-\alpha \right) ,\ 
\label{2-2e} \\
\eta _{2} &=&\frac{\lambda \left( 1-\alpha \right) \alpha ^{2}}{2}-\frac{%
\alpha ^{3}}{3},  \notag
\end{eqnarray}%
\begin{eqnarray*}
\eta _{3} &=&\frac{\left[ 1-\lambda \left( 1-\alpha \right) \right] ^{3}}{3}-%
\frac{\left[ 1-\lambda \left( 1-\alpha \right) \right] }{2}\left( 1+\left(
1-\alpha \right) ^{2}\right) +\frac{1+\left( 1-\alpha \right) ^{3}}{3}, \\
\eta _{4} &=&\frac{\left[ \lambda \left( 1-\alpha \right) \right] ^{3}}{3}-%
\frac{\lambda \left( 1-\alpha \right) \alpha ^{2}}{2}+\frac{\alpha ^{3}}{3}.
\end{eqnarray*}
\end{theorem}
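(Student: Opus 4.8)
The plan is to start from the identity of Lemma \ref{2.1}. Write $g(t)=f^{\prime}(tb+(1-t)a)$, $w_{1}(t)=t-\alpha\lambda$ and $w_{2}(t)=t-1+\lambda(1-\alpha)$, so that the quantity inside the absolute value on the left of \eqref{2-2} equals $(b-a)$ times $\int_{0}^{1-\alpha}w_{1}(t)g(t)\,dt+\int_{1-\alpha}^{1}w_{2}(t)g(t)\,dt$. Applying the triangle inequality, this absolute value is bounded by
\[
(b-a)\left[\int_{0}^{1-\alpha}|w_{1}(t)|\,|g(t)|\,dt+\int_{1-\alpha}^{1}|w_{2}(t)|\,|g(t)|\,dt\right].
\]

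For each of the two integrals I would apply the power-mean inequality (the case $q\ge 1$ of H\"{o}lder's inequality): for a nonnegative weight,
\[
\int_{I}|w(t)|\,|g(t)|\,dt\le\left(\int_{I}|w(t)|\,dt\right)^{1-\frac{1}{q}}\left(\int_{I}|w(t)|\,|g(t)|^{q}\,dt\right)^{\frac{1}{q}}.
\]
Since $tb+(1-t)a$ is a convex combination of $a$ and $b$ and $|f^{\prime}|^{q}$ is convex, we have $|g(t)|^{q}\le t|f^{\prime}(b)|^{q}+(1-t)|f^{\prime}(a)|^{q}$. Inserting this bound, each piece contributes an outer factor $\left(\int_{I}|w|\,dt\right)^{1-1/q}$ together with an inner factor that is a linear combination of $|f^{\prime}(b)|^{q}$ and $|f^{\prime}(a)|^{q}$ whose coefficients are $\int_{I}|w(t)|\,t\,dt$ and $\int_{I}|w(t)|(1-t)\,dt$. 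Comparison with the statement shows that the outer factors are the $\gamma$ and $\upsilon$ quantities and the inner coefficients are the $\mu$ and $\eta$ quantities.

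What remains is to evaluate these elementary integrals, and here lies the only real difficulty: the linear weights $w_{1}$ and $w_{2}$ change sign, vanishing at $t=\alpha\lambda$ and $t=1-\lambda(1-\alpha)$ respectively. Whether each root lies inside its interval of integration governs how $|w|$ is unfolded. For $w_{1}$ on $[0,1-\alpha]$, if $\alpha\lambda\le 1-\alpha$ one splits the integral at $\alpha\lambda$ and obtains $\int_{0}^{1-\alpha}|w_{1}|\,dt=\gamma_{2}$, while if $1-\alpha\le\alpha\lambda$ the weight is nonpositive throughout and $\int_{0}^{1-\alpha}|w_{1}|\,dt=\int_{0}^{1-\alpha}(\alpha\lambda-t)\,dt=\gamma_{1}$. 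Likewise, for $w_{2}$ on $[1-\alpha,1]$ one gets the split value $\upsilon_{2}$ when $1-\alpha\le 1-\lambda(1-\alpha)$ and the constant-sign value $\upsilon_{1}$ when $1-\lambda(1-\alpha)\le 1-\alpha$; the corresponding moment integrals $\int|w_{2}|\,t\,dt$ and $\int|w_{2}|(1-t)\,dt$ produce $\eta_{3},\eta_{4}$ in the split case and $\eta_{1},\eta_{2}$ otherwise, and similarly $\mu_{1},\mu_{2}$ versus $\mu_{3},\mu_{4}$ for $w_{1}$.

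The case distinction in \eqref{2-2} is exactly this sign bookkeeping. Since $1-\lambda(1-\alpha)-\alpha\lambda=1-\lambda\ge 0$, one always has $\alpha\lambda\le 1-\lambda(1-\alpha)$, so the three listed cases correspond precisely to the three possible positions of $1-\alpha$ relative to the two roots and thus exhaust all admissible $\alpha,\lambda\in[0,1]$. The main obstacle is therefore not conceptual but computational: carrying out the sign analysis correctly in each case and integrating the resulting degree-two polynomials to match the closed forms \eqref{2-2a}--\eqref{2-2e}; no new idea beyond Lemma \ref{2.1}, the power-mean inequality, and convexity is needed.
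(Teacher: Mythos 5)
Your proposal is correct and follows essentially the same route as the paper's own proof: the identity of Lemma \ref{2.1}, the power-mean inequality applied separately on $[0,1-\alpha]$ and $[1-\alpha,1]$, the convexity bound $\left\vert f^{\prime}\left( tb+(1-t)a\right)\right\vert^{q}\leq t\left\vert f^{\prime}(b)\right\vert^{q}+(1-t)\left\vert f^{\prime}(a)\right\vert^{q}$, and the sign analysis of the weights at $t=\alpha\lambda$ and $t=1-\lambda\left(1-\alpha\right)$ yielding exactly the paper's case splits (\ref{2-4})--(\ref{2-7}). Your added remark that $1-\lambda\left(1-\alpha\right)-\alpha\lambda=1-\lambda\geq 0$ forces $\alpha\lambda\leq 1-\lambda\left(1-\alpha\right)$, so the three listed cases are exhaustive, is a small justification the paper leaves implicit.
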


\begin{proof}
Suppose that $q\geq 1.$ From Lemma \ref{2.1} and using the well known power
mean inequality, we have%
\begin{eqnarray*}
&&\left\vert \lambda \left( \alpha f(a)+\left( 1-\alpha \right) f(b)\right)
+\left( 1-\lambda \right) f(\alpha a+\left( 1-\alpha \right) b)-\frac{1}{b-a}%
\dint\limits_{a}^{b}f(x)dx\right\vert \\
&\leq &\left( b-a\right) \left[ \dint\limits_{0}^{1-\alpha }\left\vert
t-\alpha \lambda \right\vert \left\vert f^{\prime }\left( tb+(1-t)a\right)
\right\vert dt+\dint\limits_{1-\alpha }^{1}\left\vert t-1+\lambda \left(
1-\alpha \right) \right\vert \left\vert f^{\prime }\left( tb+(1-t)a\right)
\right\vert dt\right] \\
&\leq &\left( b-a\right) \left\{ \left( \dint\limits_{0}^{1-\alpha
}\left\vert t-\alpha \lambda \right\vert dt\right) ^{1-\frac{1}{q}}\left(
\dint\limits_{0}^{1-\alpha }\left\vert t-\alpha \lambda \right\vert
\left\vert f^{\prime }\left( tb+(1-t)a\right) \right\vert ^{q}dt\right) ^{%
\frac{1}{q}}\right.
\end{eqnarray*}%
\begin{equation}
\left. +\left( \dint\limits_{1-\alpha }^{1}\left\vert t-1+\lambda \left(
1-\alpha \right) \right\vert dt\right) ^{1-\frac{1}{q}}\left(
\dint\limits_{1-\alpha }^{1}\left\vert t-1+\lambda \left( 1-\alpha \right)
\right\vert \left\vert f^{\prime }\left( tb+(1-t)a\right) \right\vert
^{q}dt\right) ^{\frac{1}{q}}\right\} .  \label{2-3}
\end{equation}

Since $\left\vert f^{\prime }\right\vert ^{q}$ is convex on $[a,b],$ we know
that for $t\in \left[ 0,1\right] $%
\begin{equation*}
\left\vert f^{\prime }\left( tb+(1-t)a\right) \right\vert ^{q}\leq
t\left\vert f^{\prime }(b)\right\vert ^{q}+(1-t)\left\vert f^{\prime
}(a)\right\vert ^{q},
\end{equation*}%
hence, by simple computation%
\begin{equation}
\dint\limits_{0}^{1-\alpha }\left\vert t-\alpha \lambda \right\vert
dt=\left\{ 
\begin{array}{cc}
\gamma _{2}, & \alpha \lambda \leq 1-\alpha \\ 
\gamma _{1}, & \alpha \lambda \geq 1-\alpha%
\end{array}%
\right. ,  \label{2-4}
\end{equation}%
\begin{equation*}
\gamma _{1}=\left( 1-\alpha \right) \left[ \alpha \lambda -\frac{\left(
1-\alpha \right) }{2}\right] ,\ \gamma _{2}=\left( \alpha \lambda \right)
^{2}-\gamma _{1}\ ,
\end{equation*}%
\begin{equation}
\dint\limits_{1-\alpha }^{1}\left\vert t-1+\lambda \left( 1-\alpha \right)
\right\vert dt=\left\{ 
\begin{array}{cc}
\upsilon _{1}, & 1-\lambda \left( 1-\alpha \right) \leq 1-\alpha \\ 
\upsilon _{2}, & 1-\lambda \left( 1-\alpha \right) \geq 1-\alpha%
\end{array}%
\right. ,  \label{2-5}
\end{equation}%
\begin{eqnarray*}
\upsilon _{1} &=&\frac{1-\left( 1-\alpha \right) ^{2}}{2}-\alpha \left[
1-\lambda \left( 1-\alpha \right) \right] , \\
\upsilon _{2} &=&\frac{1+\left( 1-\alpha \right) ^{2}}{2}-\left( \lambda
+1\right) \left( 1-\alpha \right) \left[ 1-\lambda \left( 1-\alpha \right) %
\right] ,
\end{eqnarray*}%
\begin{eqnarray}
\dint\limits_{0}^{1-\alpha }\left\vert t-\alpha \lambda \right\vert
\left\vert f^{\prime }\left( tb+(1-t)a\right) \right\vert ^{q}dt &\leq
&\dint\limits_{0}^{1-\alpha }\left\vert t-\alpha \lambda \right\vert \left[
t\left\vert f^{\prime }(b)\right\vert ^{q}+(1-t)\left\vert f^{\prime
}(a)\right\vert ^{q}\right] dt  \notag \\
&=&\left\{ 
\begin{array}{cc}
\mu _{1}\left\vert f^{\prime }(b)\right\vert ^{q}+\mu _{2}\left\vert
f^{\prime }(a)\right\vert ^{q}, & \alpha \lambda \leq 1-\alpha \\ 
\mu _{3}\left\vert f^{\prime }(b)\right\vert ^{q}+\mu _{4}\left\vert
f^{\prime }(a)\right\vert ^{q}, & \alpha \lambda \geq 1-\alpha%
\end{array}%
\right. ,  \label{2-6}
\end{eqnarray}%
\begin{eqnarray*}
\mu _{1} &=&\frac{\left( \alpha \lambda \right) ^{3}+\left( 1-\alpha \right)
^{3}}{3}-\alpha \lambda \frac{\left( 1-\alpha \right) ^{2}}{2},\ \  \\
\mu _{2} &=&\frac{1+\alpha ^{3}+\left( 1-\alpha \lambda \right) ^{3}}{3}-%
\frac{\left( 1-\alpha \lambda \right) }{2}\left( 1+\alpha ^{2}\right) ,
\end{eqnarray*}%
\begin{eqnarray*}
\mu _{3} &=&\alpha \lambda \frac{\left( 1-\alpha \right) ^{2}}{2}-\frac{%
\left( 1-\alpha \right) ^{3}}{3},\  \\
\mu _{4} &=&\frac{\left( \alpha \lambda -1\right) \left( 1-\alpha
^{2}\right) }{2}+\frac{1-\alpha ^{3}}{3},
\end{eqnarray*}%
and%
\begin{eqnarray}
&&\dint\limits_{1-\alpha }^{1}\left\vert t-1+\lambda \left( 1-\alpha \right)
\right\vert \left\vert f^{\prime }\left( tb+(1-t)a\right) \right\vert ^{q}dt
\notag \\
&\leq &\dint\limits_{1-\alpha }^{1}\left\vert t-1+\lambda \left( 1-\alpha
\right) \right\vert \left[ t\left\vert f^{\prime }(b)\right\vert
^{q}+(1-t)\left\vert f^{\prime }(a)\right\vert ^{q}\right] dt  \notag \\
&=&\left\{ 
\begin{array}{cc}
\eta _{1}\left\vert f^{\prime }(b)\right\vert ^{q}+\eta _{2}\left\vert
f^{\prime }(a)\right\vert ^{q}, & 1-\lambda \left( 1-\alpha \right) \leq
1-\alpha \\ 
\eta _{3}\left\vert f^{\prime }(b)\right\vert ^{q}+\eta _{4}\left\vert
f^{\prime }(a)\right\vert ^{q}, & 1-\lambda \left( 1-\alpha \right) \geq
1-\alpha%
\end{array}%
\right. ,  \label{2-7}
\end{eqnarray}%
where $\eta _{1},\ \eta _{2},\ \eta _{3}$ and$\ \eta _{4}$ are defined as in
(\ref{2-2e}).$\ $Thus, using (\ref{2-4})-(\ref{2-7}) in (\ref{2-3}), we
obtain the inequality (\ref{2-2}). This completes the proof.
\end{proof}

\begin{corollary}
Let the assumptions of Theorem \ref{2.2} hold. Then for $q=1$ the inequality
(\ref{2-2}) reduced to the following inequality%
\begin{equation}
\left\vert \lambda \left( \alpha f(a)+\left( 1-\alpha \right) f(b)\right)
+\left( 1-\lambda \right) f(\alpha a+\left( 1-\alpha \right) b)-\frac{1}{b-a}%
\dint\limits_{a}^{b}f(x)dx\right\vert   \label{2-8}
\end{equation}%
\begin{equation*}
\leq \left\{ 
\begin{array}{cc}
\left( b-a\right) \left\{ \left( \mu _{1+}\eta _{3}\right) \left\vert
f^{\prime }(b)\right\vert +\left( \mu _{2}+\eta _{4}\right) \left\vert
f^{\prime }(a)\right\vert \right\} , & \alpha \lambda \leq 1-\alpha \leq
1-\lambda \left( 1-\alpha \right)  \\ 
\left( b-a\right) \left\{ \left( \mu _{1}+\eta _{1}\right) \left\vert
f^{\prime }(b)\right\vert +\left( \mu _{2}+\eta _{2}\right) \left\vert
f^{\prime }(a)\right\vert \right\} , & \alpha \lambda \leq 1-\lambda \left(
1-\alpha \right) \leq 1-\alpha  \\ 
\left( b-a\right) \left\{ \left( \mu _{3}+\eta _{3}\right) \left\vert
f^{\prime }(b)\right\vert +\left( \mu _{4}+\eta _{4}\right) \left\vert
f^{\prime }(a)\right\vert \right\} , & 1-\alpha \leq \alpha \lambda \leq
1-\lambda \left( 1-\alpha \right) 
\end{array}%
\right. 
\end{equation*}
\end{corollary}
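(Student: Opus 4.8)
The plan is to derive (\ref{2-8}) by simply specializing the already-established inequality (\ref{2-2}) to the value $q=1$, so that no new integral needs to be computed. The essential observation is that at $q=1$ the two exponents occurring in (\ref{2-2}) degenerate: the outer exponent becomes $1-\tfrac{1}{q}=0$, so each prefactor $\gamma _{2}^{1-1/q}$, $\gamma _{1}^{1-1/q}$, $\upsilon _{1}^{1-1/q}$, $\upsilon _{2}^{1-1/q}$ reduces to $1$, while the inner exponent becomes $\tfrac{1}{q}=1$, so each bracket such as $\left( \mu _{1}\left\vert f^{\prime }(b)\right\vert ^{q}+\mu _{2}\left\vert f^{\prime }(a)\right\vert ^{q}\right) ^{1/q}$ collapses to $\mu _{1}\left\vert f^{\prime }(b)\right\vert +\mu _{2}\left\vert f^{\prime }(a)\right\vert $, and similarly for the remaining brackets. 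Collecting the coefficients of $\left\vert f^{\prime }(b)\right\vert $ and $\left\vert f^{\prime }(a)\right\vert $ in each of the three cases then produces exactly the three cases of (\ref{2-8}).

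To keep the argument clean and avoid dividing by a possibly indeterminate $0^{0}$, I would instead return to the intermediate estimate (\ref{2-3}) and set $q=1$ there. At $q=1$ the power-mean step that was used to pass to (\ref{2-3}) is an equality, since the two weighting factors are raised to the power $0$; hence, directly from Lemma \ref{2.1} and the triangle inequality,
\begin{equation*}
\left\vert \lambda \left( \alpha f(a)+\left( 1-\alpha \right) f(b)\right) +\left( 1-\lambda \right) f(\alpha a+\left( 1-\alpha \right) b)-\frac{1}{b-a}\dint\limits_{a}^{b}f(x)dx\right\vert
\end{equation*}
\begin{equation*}
\leq \left( b-a\right) \left[ \dint\limits_{0}^{1-\alpha }\left\vert t-\alpha \lambda \right\vert \left\vert f^{\prime }\left( tb+(1-t)a\right) \right\vert dt+\dint\limits_{1-\alpha }^{1}\left\vert t-1+\lambda \left( 1-\alpha \right) \right\vert \left\vert f^{\prime }\left( tb+(1-t)a\right) \right\vert dt\right] .
\end{equation*}
Next I would apply the convexity bounds (\ref{2-6}) and (\ref{2-7}), taken at $q=1$, to the two integrals on the right. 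These are precisely the estimates already obtained in the proof of Theorem \ref{2.2}: the first integral is at most $\mu _{1}\left\vert f^{\prime }(b)\right\vert +\mu _{2}\left\vert f^{\prime }(a)\right\vert $ when $\alpha \lambda \leq 1-\alpha $ and at most $\mu _{3}\left\vert f^{\prime }(b)\right\vert +\mu _{4}\left\vert f^{\prime }(a)\right\vert $ when $\alpha \lambda \geq 1-\alpha $, while the second is at most $\eta _{1}\left\vert f^{\prime }(b)\right\vert +\eta _{2}\left\vert f^{\prime }(a)\right\vert $ when $1-\lambda \left( 1-\alpha \right) \leq 1-\alpha $ and at most $\eta _{3}\left\vert f^{\prime }(b)\right\vert +\eta _{4}\left\vert f^{\prime }(a)\right\vert $ when $1-\lambda \left( 1-\alpha \right) \geq 1-\alpha $. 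Adding the two bounds and regrouping the coefficients of $\left\vert f^{\prime }(b)\right\vert $ and $\left\vert f^{\prime }(a)\right\vert $ yields the three stated cases of (\ref{2-8}).

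I expect essentially no genuine obstacle, since the corollary is a direct specialization. The only point requiring care is the bookkeeping: matching each of the three ordering chains in (\ref{2-2})---namely $\alpha \lambda \leq 1-\alpha \leq 1-\lambda \left( 1-\alpha \right) $, $\alpha \lambda \leq 1-\lambda \left( 1-\alpha \right) \leq 1-\alpha $, and $1-\alpha \leq \alpha \lambda \leq 1-\lambda \left( 1-\alpha \right) $---to the correct selection of $\mu $- and $\eta $-coefficients as dictated by the sign conditions in (\ref{2-4})--(\ref{2-5}), and verifying that these three chains cover the relevant orderings. If one prefers literally substituting $q=1$ into (\ref{2-2}), the sole subtlety is the $0^{0}$ ambiguity that arises when some $\gamma _{i}$ or $\upsilon _{i}$ vanishes; this is harmless, since such a factor can vanish only when the corresponding integration interval degenerates, in which case the matching second factor vanishes as well and the term drops out consistently.
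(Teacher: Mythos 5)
Your proposal is correct and takes essentially the same route as the paper, which states the corollary as a direct specialization of Theorem \ref{2.2} at $q=1$ (the exponents $1-\frac{1}{q}$ and $\frac{1}{q}$ collapsing to $0$ and $1$, so the three cases of (\ref{2-2}) reduce to the three linear bounds of (\ref{2-8})). Your additional care in rederiving the bound from the intermediate estimate (\ref{2-3}) together with (\ref{2-6}) and (\ref{2-7}) at $q=1$, thereby sidestepping any $0^{0}$ ambiguity, is a harmless and sensible refinement of the same argument.
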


\begin{corollary}
Let the assumptions of Theorem \ref{2.2} hold. Then for $\alpha =\frac{1}{2}$
and $\lambda =\frac{1}{3}$, from the inequality (\ref{2-2}) we get the
following Simpson type inequality 
\begin{eqnarray}
&&\left\vert \frac{1}{6}\left[ f(a)+4f\left( \frac{a+b}{2}\right) +f(b)%
\right] -\frac{1}{b-a}\dint\limits_{a}^{b}f(x)dx\right\vert  \label{2-9} \\
&\leq &\left( b-a\right) \left( \frac{5}{72}\right) ^{1-\frac{1}{q}}\left\{
\left( \frac{29}{1296}\left\vert f^{\prime }(b)\right\vert ^{q}+\frac{61}{%
1296}\left\vert f^{\prime }(a)\right\vert ^{q}\right) ^{\frac{1}{q}}\right. 
\notag \\
&&\left. +\left( \frac{61}{1296}\left\vert f^{\prime }(b)\right\vert ^{q}+%
\frac{29}{1296}\left\vert f^{\prime }(a)\right\vert ^{q}\right) ^{\frac{1}{q}%
}\right\} ,  \notag
\end{eqnarray}%
which is the same of the inequality in \cite[Theorem 10]{SSO10} for $s=1$ .
\end{corollary}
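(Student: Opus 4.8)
The plan is to specialize Theorem \ref{2.2} by substituting $\alpha = \frac{1}{2}$ and $\lambda = \frac{1}{3}$ into the inequality (\ref{2-2}) together with its auxiliary quantities (\ref{2-2a})--(\ref{2-2e}), and then to simplify both sides. The proof is entirely computational: it amounts to selecting the correct case, evaluating six rational coefficients, and rewriting the left-hand expression in Simpson form.

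First I would settle the case distinction. With these values one has $\alpha\lambda = \frac{1}{6}$, $1-\alpha = \frac{1}{2}$, and $1-\lambda(1-\alpha) = \frac{5}{6}$, so that $\alpha\lambda \le 1-\alpha \le 1-\lambda(1-\alpha)$. Hence the first branch of (\ref{2-2}) is the relevant one, and the bound becomes $(b-a)\{\gamma_2^{1-1/q}(\mu_1|f^{\prime}(b)|^q + \mu_2|f^{\prime}(a)|^q)^{1/q} + \upsilon_2^{1-1/q}(\eta_3|f^{\prime}(b)|^q + \eta_4|f^{\prime}(a)|^q)^{1/q}\}$, so that only the coefficients $\gamma_2,\upsilon_2,\mu_1,\mu_2,\eta_3,\eta_4$ need to be computed.

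Next I would evaluate these six coefficients from their definitions. A direct computation gives $\gamma_1 = (1-\alpha)[\alpha\lambda-\tfrac{1-\alpha}{2}] = -\frac{1}{24}$, whence $\gamma_2 = (\alpha\lambda)^2 - \gamma_1 = \frac{1}{36} + \frac{1}{24} = \frac{5}{72}$, and likewise $\upsilon_2 = \frac{5}{8} - \frac{5}{9} = \frac{5}{72}$. For the remaining terms one finds $\mu_1 = \eta_4 = \frac{29}{1296}$ and $\mu_2 = \eta_3 = \frac{61}{1296}$. The symmetry $\mu_1 = \eta_4$, $\mu_2 = \eta_3$ that appears at $\alpha = \frac{1}{2}$ is precisely what yields the clean symmetric form of (\ref{2-9}); I would verify each of these fractions by reducing over the common denominator $1296$.

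Finally I would simplify the left-hand side: with $\alpha=\frac12$, $\lambda=\frac13$ the expression $\lambda(\alpha f(a)+(1-\alpha)f(b))+(1-\lambda)f(\alpha a+(1-\alpha)b)$ equals $\frac{1}{6}(f(a)+f(b)) + \frac{2}{3}f(\frac{a+b}{2}) = \frac{1}{6}[f(a)+4f(\frac{a+b}{2})+f(b)]$. Substituting the computed coefficients into the first-case bound then reproduces (\ref{2-9}) exactly. No step poses a conceptual obstacle; the only real work is the bookkeeping of the fractions, so the main care required is to keep the coefficient arithmetic correct and to confirm the case ordering at the outset.
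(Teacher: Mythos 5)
Your proposal is correct and is exactly the argument the paper intends (the corollary is stated without explicit proof, as a direct specialization of Theorem \ref{2.2}): you correctly identify the first case since $\alpha\lambda=\frac{1}{6}\leq 1-\alpha=\frac{1}{2}\leq 1-\lambda(1-\alpha)=\frac{5}{6}$, and all your coefficient evaluations check out, namely $\gamma_{2}=\upsilon_{2}=\frac{5}{72}$, $\mu_{1}=\eta_{4}=\frac{29}{1296}$, $\mu_{2}=\eta_{3}=\frac{61}{1296}$, as does the reduction of the left-hand side to the Simpson form $\frac{1}{6}\left[ f(a)+4f\left( \frac{a+b}{2}\right) +f(b)\right]$. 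Nothing is missing.
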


\begin{corollary}
\label{3}Let the assumptions of Theorem \ref{2.2} hold. Then for $\alpha =%
\frac{1}{2}$ and $\lambda =0,$from the inequality (\ref{2-2}) we get the
following midpoint type inequality%
\begin{eqnarray}
&&\left\vert f\left( \frac{a+b}{2}\right) -\frac{1}{b-a}\dint%
\limits_{a}^{b}f(x)dx\right\vert   \label{2-10} \\
&\leq &\frac{b-a}{8}\left\{ \left( \frac{\left\vert f^{\prime
}(b)\right\vert ^{q}+2\left\vert f^{\prime }(a)\right\vert ^{q}}{3}\right) ^{%
\frac{1}{q}}+\left( \frac{2\left\vert f^{\prime }(b)\right\vert
^{q}+\left\vert f^{\prime }(a)\right\vert ^{q}}{3}\right) ^{\frac{1}{q}%
}\right\}   \notag
\end{eqnarray}
\end{corollary}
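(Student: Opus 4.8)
The plan is to specialize Theorem~\ref{2.2} directly by setting $\alpha=\frac12$ and $\lambda=0$ and then simplifying the resulting coefficients; no new analytic input is required beyond the already-established inequality~(\ref{2-2}).

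First I would check that the left-hand side of (\ref{2-2}) collapses to the midpoint expression. With $\lambda=0$ the term $\lambda(\alpha f(a)+(1-\alpha)f(b))$ vanishes and the factor $(1-\lambda)$ becomes $1$, while $\alpha a+(1-\alpha)b=\frac{a+b}{2}$ when $\alpha=\frac12$; hence the left-hand side is exactly $\left\vert f\!\left(\frac{a+b}{2}\right)-\frac{1}{b-a}\int_a^b f(x)\,dx\right\vert$, matching the left-hand side of (\ref{2-10}).

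Next I would decide which of the three cases in (\ref{2-2}) is active. Evaluating the three governing quantities gives $\alpha\lambda=0$, $1-\alpha=\frac12$, and $1-\lambda(1-\alpha)=1$, so that $\alpha\lambda\le 1-\alpha\le 1-\lambda(1-\alpha)$; this is precisely the first branch, so the relevant coefficients are $\gamma_2,\upsilon_2$ together with $\mu_1,\mu_2,\eta_3,\eta_4$. The bulk of the work is then the routine evaluation of these six constants from (\ref{2-2a}), (\ref{2-2b}), (\ref{2-2c}) and the definitions of $\eta_3,\eta_4$. One finds $\gamma_2=\upsilon_2=\tfrac18$, together with $\mu_1=\eta_4=\tfrac{1}{24}$ and $\mu_2=\eta_3=\tfrac{1}{12}$.

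The final step is the simplification that produces the clean constant $\frac{b-a}{8}$. Writing $\tfrac{1}{24}=\tfrac18\cdot\tfrac13$ and $\tfrac{1}{12}=\tfrac18\cdot\tfrac23$ lets me pull a factor $(1/8)^{1/q}$ out of each of the two $q$-th root terms; combined with the prefactors $\gamma_2^{1-1/q}=\upsilon_2^{1-1/q}=(1/8)^{1-1/q}$, the powers $(1/8)^{1-1/q}(1/8)^{1/q}=1/8$ assemble into a single factor $(b-a)/8$ multiplying the sum of the two bracketed means, yielding (\ref{2-10}) verbatim. The only point demanding care is confirming that the case condition genuinely selects the first branch and that the pairing of $(\mu_1,\mu_2)$ with $(\eta_3,\eta_4)$ is the one prescribed there; once that bookkeeping is settled the remainder is elementary arithmetic, which is why this is stated as a corollary rather than a theorem.
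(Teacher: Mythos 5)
Your proposal is correct and is exactly the derivation the paper intends: substitute $\alpha=\frac{1}{2}$, $\lambda=0$ into (\ref{2-2}), note that $\alpha\lambda=0\leq\frac{1}{2}\leq 1$ selects the first branch, and evaluate $\gamma_{2}=\upsilon_{2}=\frac{1}{8}$, $\mu_{1}=\eta_{4}=\frac{1}{24}$, $\mu_{2}=\eta_{3}=\frac{1}{12}$, after which the factorization $(1/8)^{1-\frac{1}{q}}(1/8)^{\frac{1}{q}}=\frac{1}{8}$ yields (\ref{2-10}); all of your constants check out.
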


\begin{corollary}
In Corollary \ref{3}, if $q=1$, then we have the following midpoint type
inequality%
\begin{equation}
\left\vert f\left( \frac{a+b}{2}\right) -\frac{1}{b-a}\dint%
\limits_{a}^{b}f(x)dx\right\vert \leq \frac{b-a}{4}\left( \frac{\left\vert
f^{\prime }(a)\right\vert +\left\vert f^{\prime }(b)\right\vert }{2}\right) .
\label{2-11}
\end{equation}%
which is the same of the inequality (\ref{1-3}).
\end{corollary}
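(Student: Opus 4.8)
The plan is to specialize the conclusion (\ref{2-10}) of Corollary \ref{3} directly to the exponent $q=1$ and then collapse the two bracketed terms into a single expression. First I would substitute $q=1$ into the right-hand side of (\ref{2-10}); since the outer exponents $\frac{1}{q}$ become $1$, each of the two weighted arithmetic means appears linearly and no $q$-th root remains. This leaves the bound
\begin{equation*}
\frac{b-a}{8}\left\{ \frac{\left\vert f^{\prime }(b)\right\vert +2\left\vert f^{\prime }(a)\right\vert }{3}+\frac{2\left\vert f^{\prime }(b)\right\vert +\left\vert f^{\prime }(a)\right\vert }{3}\right\} .
\end{equation*}

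Next I would add the two fractions over the common denominator $3$. The coefficient of $\left\vert f^{\prime }(b)\right\vert $ is $\tfrac{1}{3}+\tfrac{2}{3}=1$, and by the symmetry of the two terms the coefficient of $\left\vert f^{\prime }(a)\right\vert $ is likewise $\tfrac{2}{3}+\tfrac{1}{3}=1$. Hence the braced quantity reduces exactly to $\left\vert f^{\prime }(a)\right\vert +\left\vert f^{\prime }(b)\right\vert $, and the bound becomes $\frac{b-a}{8}\left( \left\vert f^{\prime }(a)\right\vert +\left\vert f^{\prime }(b)\right\vert \right) $.

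Finally I would rewrite $\frac{b-a}{8}$ as $\frac{b-a}{4}\cdot\frac{1}{2}$ so as to display the right-hand side in the averaged form $\frac{b-a}{4}\left( \frac{\left\vert f^{\prime }(a)\right\vert +\left\vert f^{\prime }(b)\right\vert }{2}\right) $, which is precisely the right-hand side of (\ref{1-3}). This identification completes the argument. There is no real obstacle here: the entire content is the elementary observation that setting $q=1$ turns the power mean into a plain arithmetic mean and that the two complementary weightings $\left( \tfrac{1}{3},\tfrac{2}{3}\right) $ and $\left( \tfrac{2}{3},\tfrac{1}{3}\right) $ add to the uniform weighting $(1,1)$; the only point to check carefully is the bookkeeping of the coefficients, which I verified above.
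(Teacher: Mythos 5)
Your proposal is correct and matches the paper's (implicit) derivation: the corollary is obtained exactly by setting $q=1$ in (\ref{2-10}), summing the complementary weights $\tfrac{1}{3}+\tfrac{2}{3}=1$ on each of $\left\vert f^{\prime }(a)\right\vert$ and $\left\vert f^{\prime }(b)\right\vert$, and rewriting $\tfrac{b-a}{8}$ as $\tfrac{b-a}{4}\cdot\tfrac{1}{2}$ to recover the form of (\ref{1-3}). Your coefficient bookkeeping checks out, and nothing further is needed.
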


\begin{corollary}
Let the assumptions of Theorem \ref{2.2} hold. Then for $\alpha =\frac{1}{2}$
and $\lambda =1,$from the inequality (\ref{2-2}) we get the following
trapezoid type inequality%
\begin{eqnarray*}
&&\left\vert \frac{f\left( a\right) +f\left( b\right) }{2}-\frac{1}{b-a}%
\dint\limits_{a}^{b}f(x)dx\right\vert  \\
&\leq &\frac{b-a}{8}\left\{ \left( \frac{\left\vert f^{\prime
}(b)\right\vert ^{q}+5\left\vert f^{\prime }(a)\right\vert ^{q}}{6}\right) ^{%
\frac{1}{q}}+\left( \frac{5\left\vert f^{\prime }(b)\right\vert
^{q}+\left\vert f^{\prime }(a)\right\vert ^{q}}{6}\right) ^{\frac{1}{q}%
}\right\} 
\end{eqnarray*}
\end{corollary}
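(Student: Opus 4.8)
The plan is to obtain this trapezoid inequality as a direct specialization of Theorem \ref{2.2}, in exactly the same manner as the preceding midpoint corollary ($\alpha=\frac12,\lambda=0$) and Simpson corollary ($\alpha=\frac12,\lambda=\frac13$). First I would set $\alpha=\frac12$ and $\lambda=1$ throughout (\ref{2-2}). With these values the left-hand side collapses immediately: the term $(1-\lambda)f(\alpha a+(1-\alpha)b)$ vanishes since $1-\lambda=0$, while $\lambda(\alpha f(a)+(1-\alpha)f(b))=\frac{f(a)+f(b)}{2}$, so we recover precisely $\left|\frac{f(a)+f(b)}{2}-\frac{1}{b-a}\int_a^b f(x)\,dx\right|$, the quantity to be bounded.

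The next step is to determine which of the three branches of (\ref{2-2}) applies. Evaluating the three decision quantities at $\alpha=\frac12,\lambda=1$ gives $\alpha\lambda=\frac12$, $1-\alpha=\frac12$, and $1-\lambda(1-\alpha)=\frac12$, which are all equal. Thus this choice lies exactly on the common boundary of the three cases, and one may evaluate the bound using any branch; I would use the first case, the other two agreeing there because the piecewise constants coincide on the boundary.

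It then remains to compute the six constants in the first branch. A short calculation from the explicit formulas (\ref{2-2a})--(\ref{2-2e}) (together with the definitions of $\eta_3,\eta_4$) yields $\gamma_2=\upsilon_2=\frac18$, $\mu_1=\eta_4=\frac{1}{48}$, and $\mu_2=\eta_3=\frac{5}{48}$. Substituting these into the first case of (\ref{2-2}) produces a bound in which each of the two summands carries the prefactor $\left(\frac18\right)^{1-\frac1q}$ and an inner expression with coefficients $\frac{1}{48}$ and $\frac{5}{48}$.

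The only genuine work is the final simplification, which rests on the identity $\left(\frac18\right)^{1-\frac1q}\left(\frac{1}{48}\right)^{\frac1q}=\frac18\left(\frac16\right)^{\frac1q}$, obtained by writing $\frac{1}{48}=\frac18\cdot\frac16$ and combining the two powers of $\frac18$. Pulling the factor $\frac{1}{48}$ out of the $\frac1q$-th power and applying this identity turns the first summand into $\frac{b-a}{8}\left(\frac{|f'(b)|^q+5|f'(a)|^q}{6}\right)^{\frac1q}$ and, symmetrically, the second into $\frac{b-a}{8}\left(\frac{5|f'(b)|^q+|f'(a)|^q}{6}\right)^{\frac1q}$, which is exactly the claimed inequality. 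The main obstacle is therefore not conceptual but purely computational: one must evaluate the six constants without arithmetic slips and track the exponent bookkeeping in the last step correctly.
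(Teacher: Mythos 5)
Your proposal is correct and follows exactly the route the paper intends: substituting $\alpha=\frac{1}{2}$, $\lambda=1$ into (\ref{2-2}), noting that $\alpha\lambda=1-\alpha=1-\lambda(1-\alpha)=\frac{1}{2}$ puts this on the common boundary where all three branches agree, and your constants $\gamma_{2}=\upsilon_{2}=\frac{1}{8}$, $\mu_{1}=\eta_{4}=\frac{1}{48}$, $\mu_{2}=\eta_{3}=\frac{5}{48}$ all check out. The final exponent bookkeeping via $\frac{1}{48}=\frac{1}{8}\cdot\frac{1}{6}$ and $\left(\frac{1}{8}\right)^{1-\frac{1}{q}}\left(\frac{1}{8}\right)^{\frac{1}{q}}=\frac{1}{8}$ is also right, yielding precisely the stated bound.
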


Using Lemma \ref{2.1} we shall give another result for convex functions as
follows.

\begin{theorem}
\label{2.3}Let $f:I\subset \mathbb{R\rightarrow R}$ be a differentiable
mapping on $I^{\circ }$ such that $f^{\prime }\in L[a,b]$, where $a,b\in
I^{\circ }$ with $a<b$ and $\alpha ,\lambda \in \left[ 0,1\right] $. If $%
\left\vert f^{\prime }\right\vert ^{q}$ is convex on $[a,b]$, $q>1,$ then
the following inequality holds:%
\begin{equation}
\left\vert \lambda \left( \alpha f(a)+\left( 1-\alpha \right) f(b)\right)
+\left( 1-\lambda \right) f(\alpha a+\left( 1-\alpha \right) b)-\frac{1}{b-a}%
\dint\limits_{a}^{b}f(x)dx\right\vert \leq \left( b-a\right)  \label{2-12}
\end{equation}%
\begin{equation*}
\times \left( \frac{1}{p+1}\right) ^{\frac{1}{p}}\left\{ 
\begin{array}{cc}
\left[ \left( 1-\alpha \right) ^{\frac{1}{q}}\varepsilon _{1}^{\frac{1}{p}%
}\delta _{1}^{\frac{1}{q}}+\alpha ^{\frac{1}{q}}\varepsilon _{3}^{\frac{1}{p}%
}\delta _{2}^{\frac{1}{q}}\right] , & \alpha \lambda \leq 1-\alpha \leq
1-\lambda \left( 1-\alpha \right) \\ 
\left[ \left( 1-\alpha \right) ^{\frac{1}{q}}\varepsilon _{1}^{\frac{1}{p}%
}\delta _{1}^{\frac{1}{q}}+\alpha ^{\frac{1}{q}}\varepsilon _{4}^{\frac{1}{p}%
}\delta _{2}^{\frac{1}{q}}\right] , & \alpha \lambda \leq 1-\lambda \left(
1-\alpha \right) \leq 1-\alpha \\ 
\left[ \left( 1-\alpha \right) ^{\frac{1}{q}}\varepsilon _{2}^{\frac{1}{p}%
}\delta _{1}^{\frac{1}{q}}+\alpha ^{\frac{1}{q}}\varepsilon _{3}^{\frac{1}{p}%
}\delta _{2}^{\frac{1}{q}}\right] , & 1-\alpha \leq \alpha \lambda \leq
1-\lambda \left( 1-\alpha \right)%
\end{array}%
\right.
\end{equation*}%
where 
\begin{equation}
\delta _{1}=\frac{\left\vert f^{\prime }\left( \left( 1-\alpha \right)
b+\alpha a\right) \right\vert ^{q}+\left\vert f^{\prime }\left( a\right)
\right\vert ^{q}}{2},\ \delta _{2}=\frac{\left\vert f^{\prime }\left( \left(
1-\alpha \right) b+\alpha a\right) \right\vert ^{q}+\left\vert f^{\prime
}\left( b\right) \right\vert ^{q}}{2},  \label{2-12a}
\end{equation}%
\begin{eqnarray}
\varepsilon _{1} &=&\left( \alpha \lambda \right) ^{p+1}+\left( 1-\alpha
-\alpha \lambda \right) ^{p+1},\ \varepsilon _{2}=\left( \alpha \lambda
\right) ^{p+1}-\left( \alpha \lambda -1+\alpha \right) ^{p+1},  \notag \\
\varepsilon _{3} &=&\left[ \lambda \left( 1-\alpha \right) \right] ^{p+1}+%
\left[ \alpha -\lambda \left( 1-\alpha \right) \right] ^{p+1},\ \varepsilon
_{4}=\left[ \lambda \left( 1-\alpha \right) \right] ^{p+1}-\left[ \lambda
\left( 1-\alpha \right) -\alpha \right] ^{p+1},  \notag
\end{eqnarray}%
and $\frac{1}{p}+\frac{1}{q}=1.$
\end{theorem}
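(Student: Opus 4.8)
The plan is to mirror the opening of the proof of Theorem~\ref{2.2}, but to invoke H\"older's inequality in place of the power-mean inequality; this substitution is what produces the factor $\left(\frac{1}{p+1}\right)^{\frac1p}$ and the quantities $\varepsilon_1,\dots,\varepsilon_4$. First I would start from the identity \eqref{2-1} of Lemma~\ref{2.1}, move the absolute value inside the two integrals by the triangle inequality, and so bound the left-hand side of \eqref{2-12} by
\[
\left(b-a\right)\left[\int_0^{1-\alpha}\left|t-\alpha\lambda\right|\left|f'(tb+(1-t)a)\right|dt+\int_{1-\alpha}^1\left|t-1+\lambda(1-\alpha)\right|\left|f'(tb+(1-t)a)\right|dt\right].
\]
To each of these integrals I would then apply H\"older's inequality with exponents $p$ and $q$, keeping the weight $\left|t-\alpha\lambda\right|$ (respectively $\left|t-1+\lambda(1-\alpha)\right|$) entirely in the first factor, so that each summand splits as a product of an $L^p$-norm of the weight and a term $\left(\int\left|f'(tb+(1-t)a)\right|^q dt\right)^{\frac1q}$ over the relevant subinterval.

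For the weight factors I would evaluate the elementary power integrals $\int\left|t-c\right|^p\,dt$ exactly. On $[0,1-\alpha]$ the kink of the integrand sits at $t=\alpha\lambda$, while on $[1-\alpha,1]$ it sits at $t=1-\lambda(1-\alpha)$; whether each kink lies inside or outside its interval of integration decides whether the integral breaks into two monotone pieces or remains a single one. These integrations yield $\int_0^{1-\alpha}\left|t-\alpha\lambda\right|^p dt=\frac{1}{p+1}\varepsilon_1$ when $\alpha\lambda\le 1-\alpha$ and $\frac{1}{p+1}\varepsilon_2$ otherwise, and similarly $\int_{1-\alpha}^1\left|t-1+\lambda(1-\alpha)\right|^p dt=\frac{1}{p+1}\varepsilon_3$ when $1-\lambda(1-\alpha)\ge 1-\alpha$ and $\frac{1}{p+1}\varepsilon_4$ otherwise, with $\varepsilon_1,\dots,\varepsilon_4$ exactly as in the statement. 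The three displayed regimes of \eqref{2-12} are then the three admissible pairings of these sub-cases; the remaining pairing $(\varepsilon_2,\varepsilon_4)$ cannot occur because $\alpha\lambda\le 1-\lambda(1-\alpha)$ always holds (indeed $\alpha\lambda+\lambda(1-\alpha)=\lambda\le1$), so $\alpha\lambda\ge1-\alpha$ and $1-\lambda(1-\alpha)\le1-\alpha$ are jointly impossible.

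The factors $\delta_1,\delta_2$ arise from the second H\"older factor, and this is the one genuinely new ingredient relative to Theorem~\ref{2.2}: instead of inserting the convexity estimate $\left|f'(tb+(1-t)a)\right|^q\le t\left|f'(b)\right|^q+(1-t)\left|f'(a)\right|^q$ on all of $[0,1]$, I would rescale each subinterval to $[0,1]$ and apply convexity there. On $[0,1-\alpha]$ the argument $tb+(1-t)a$ moves linearly from $a$ to the interior point $(1-\alpha)b+\alpha a$, so after the substitution $s=t/(1-\alpha)$ convexity of $\left|f'\right|^q$ gives $\int_0^{1-\alpha}\left|f'(tb+(1-t)a)\right|^q dt\le(1-\alpha)\delta_1$; symmetrically, on $[1-\alpha,1]$ the argument runs from $(1-\alpha)b+\alpha a$ to $b$, giving the bound $\alpha\,\delta_2$. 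Taking $q$-th roots then produces the factors $(1-\alpha)^{\frac1q}\delta_1^{\frac1q}$ and $\alpha^{\frac1q}\delta_2^{\frac1q}$ that appear in \eqref{2-12}, with $\delta_1,\delta_2$ as in \eqref{2-12a}.

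I expect the main obstacle to be bookkeeping rather than analysis: correctly matching the two independent case splits (the location of each kink) to the three stated regimes, and verifying that the power integrals collapse to precisely the stated $\varepsilon_i$. Once the weight factors and the $\delta$-factors are assembled and the common factor $\left(\frac{1}{p+1}\right)^{\frac1p}$ is collected, the three cases of \eqref{2-12} follow directly.
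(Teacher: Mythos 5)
Your proposal is correct and follows essentially the same route as the paper: Lemma \ref{2.1}, H\"older's inequality with the weight kept in the $L^p$ factor, the exact power integrals (\ref{2-16})--(\ref{2-17}) giving $\varepsilon_1,\dots,\varepsilon_4$, and the bounds $(1-\alpha)\delta_1$, $\alpha\,\delta_2$ for the $L^q$ factors. The only cosmetic difference is that where you re-derive the bound on $\int_0^{1-\alpha}\left\vert f^{\prime}(tb+(1-t)a)\right\vert^{q}dt$ by rescaling to $[0,1]$ and integrating the pointwise convexity estimate, the paper obtains the identical bound by citing the right-hand Hermite--Hadamard inequality (\ref{1-1}) on the subinterval $[a,(1-\alpha)b+\alpha a]$ (and your observation that the pairing $(\varepsilon_2,\varepsilon_4)$ is vacuous, since $\alpha\lambda\leq 1-\lambda(1-\alpha)$ always, is a point the paper leaves implicit).
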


\begin{proof}
From Lemma \ref{2.1} and by H\"{o}lder's integral inequality, we have%
\begin{eqnarray*}
&&\left\vert \lambda \left( \alpha f(a)+\left( 1-\alpha \right) f(b)\right)
+\left( 1-\lambda \right) f(\alpha a+\left( 1-\alpha \right) b)-\frac{1}{b-a}%
\dint\limits_{a}^{b}f(x)dx\right\vert \\
&\leq &\left( b-a\right) \left[ \dint\limits_{0}^{1-\alpha }\left\vert
t-\alpha \lambda \right\vert \left\vert f^{\prime }\left( tb+(1-t)a\right)
\right\vert dt+\dint\limits_{1-\alpha }^{1}\left\vert t-1+\lambda \left(
1-\alpha \right) \right\vert \left\vert f^{\prime }\left( tb+(1-t)a\right)
\right\vert dt\right] \\
&\leq &\left( b-a\right) \left\{ \left( \dint\limits_{0}^{1-\alpha
}\left\vert t-\alpha \lambda \right\vert ^{p}dt\right) ^{\frac{1}{p}}\left(
\dint\limits_{0}^{1-\alpha }\left\vert f^{\prime }\left( tb+(1-t)a\right)
\right\vert ^{q}dt\right) ^{\frac{1}{q}}\right.
\end{eqnarray*}%
\begin{equation}
+\left. \left( \dint\limits_{1-\alpha }^{1}\left\vert t-1+\lambda \left(
1-\alpha \right) \right\vert ^{p}dt\right) ^{\frac{1}{p}}\left(
\dint\limits_{1-\alpha }^{1}\left\vert f^{\prime }\left( tb+(1-t)a\right)
\right\vert ^{q}dt\right) ^{\frac{1}{q}}\right\} .  \label{2-13}
\end{equation}%
Since $\left\vert f^{\prime }\right\vert ^{q}$ is convex on $[a,b],$ for $%
\alpha \in \left[ 0,1\right) $ by the inequality (\ref{1-1}), we get 
\begin{eqnarray}
\dint\limits_{0}^{1-\alpha }\left\vert f^{\prime }\left( tb+(1-t)a\right)
\right\vert ^{q}dt &=&\left( 1-\alpha \right) \left[ \frac{1}{\left(
1-\alpha \right) \left( b-a\right) }\dint\limits_{a}^{\left( 1-\alpha
\right) b+\alpha a}\left\vert f^{\prime }\left( x\right) \right\vert ^{q}dx%
\right]  \notag \\
&\leq &\left( 1-\alpha \right) \frac{\left\vert f^{\prime }\left( \left(
1-\alpha \right) b+\alpha a\right) \right\vert ^{q}+\left\vert f^{\prime
}\left( a\right) \right\vert ^{q}}{2}.  \label{2-14}
\end{eqnarray}%
The inequality (\ref{2-14}) holds for $\alpha =1$ too. Similarly, for $%
\alpha \in \left( 0,1\right] $ by the inequality (\ref{1-1}), we have 
\begin{eqnarray}
\dint\limits_{1-\alpha }^{1}\left\vert f^{\prime }\left( tb+(1-t)a\right)
\right\vert ^{q}dt &=&\alpha \left[ \frac{1}{\alpha \left( b-a\right) }%
\dint\limits_{\left( 1-\alpha \right) b+\alpha a}^{b}\left\vert f^{\prime
}\left( x\right) \right\vert ^{q}dx\right]  \notag \\
&\leq &\alpha \frac{\left\vert f^{\prime }\left( \left( 1-\alpha \right)
b+\alpha a\right) \right\vert ^{q}+\left\vert f^{\prime }\left( b\right)
\right\vert ^{q}}{2}.  \label{2-15}
\end{eqnarray}%
The inequality (\ref{2-15}) holds for $\alpha =0$ too. By simple computation%
\begin{equation}
\dint\limits_{0}^{1-\alpha }\left\vert t-\alpha \lambda \right\vert
^{p}dt=\left\{ 
\begin{array}{cc}
\frac{\left( \alpha \lambda \right) ^{p+1}+\left( 1-\alpha -\alpha \lambda
\right) ^{p+1}}{p+1}, & \alpha \lambda \leq 1-\alpha \\ 
\frac{\left( \alpha \lambda \right) ^{p+1}-\left( \alpha \lambda -1+\alpha
\right) ^{p+1}}{p+1}, & \alpha \lambda \geq 1-\alpha%
\end{array}%
\right. ,  \label{2-16}
\end{equation}%
and%
\begin{equation}
\dint\limits_{1-\alpha }^{1}\left\vert t-1+\lambda \left( 1-\alpha \right)
\right\vert ^{p}dt=\left\{ 
\begin{array}{cc}
\frac{\left[ \lambda \left( 1-\alpha \right) \right] ^{p+1}+\left[ \alpha
-\lambda \left( 1-\alpha \right) \right] ^{p+1}}{p+1}, & 1-\alpha \leq
1-\lambda \left( 1-\alpha \right) \\ 
\frac{\left[ \lambda \left( 1-\alpha \right) \right] ^{p+1}-\left[ \lambda
\left( 1-\alpha \right) -\alpha \right] ^{p+1}}{p+1}, & 1-\alpha \geq
1-\lambda \left( 1-\alpha \right)%
\end{array}%
\right. ,  \label{2-17}
\end{equation}%
thus, using (\ref{2-14})-(\ref{2-17}) in (\ref{2-13}), we obtain the
inequality (\ref{2-12}). This completes the proof.
\end{proof}

\begin{corollary}
Let the assumptions of Theorem \ref{2.3} hold. Then for $\alpha =\frac{1}{2}$
and $\lambda =\frac{1}{3}$, from the inequality (\ref{2-12}) we get the
following Simpson type inequality 
\begin{equation}
\left\vert \frac{1}{6}\left[ f(a)+4f\left( \frac{a+b}{2}\right) +f(b)\right]
-\frac{1}{b-a}\dint\limits_{a}^{b}f(x)dx\right\vert  \label{2-18}
\end{equation}%
\begin{equation*}
\leq \frac{b-a}{12}\left( \frac{1+2^{p+1}}{3\left( p+1\right) }\right) ^{%
\frac{1}{p}}\left\{ \left( \frac{\left\vert f^{\prime }\left( \frac{a+b}{2}%
\right) \right\vert ^{q}+\left\vert f^{\prime }\left( a\right) \right\vert
^{q}}{2}\right) ^{\frac{1}{q}}+\left( \frac{\left\vert f^{\prime }\left( 
\frac{a+b}{2}\right) \right\vert ^{q}+\left\vert f^{\prime }\left( b\right)
\right\vert ^{q}}{2}\right) ^{\frac{1}{q}}\right\} ,
\end{equation*}%
which is the same of the inequality (\ref{1-2}).
\end{corollary}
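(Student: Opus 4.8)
The plan is to specialize Theorem~\ref{2.3} to $\alpha=\tfrac12$ and $\lambda=\tfrac13$ and check that the resulting expression collapses to the stated Simpson-type bound. First I would record the relevant quantities at these parameter values: with $\alpha=\tfrac12$ one has $1-\alpha=\tfrac12$, $\alpha\lambda=\tfrac16$, $\lambda(1-\alpha)=\tfrac16$, and $1-\lambda(1-\alpha)=\tfrac56$. Comparing these gives $\alpha\lambda=\tfrac16\le 1-\alpha=\tfrac12\le 1-\lambda(1-\alpha)=\tfrac56$, so we fall into the first of the three cases of (\ref{2-12}), and only the branch involving $\varepsilon_1,\varepsilon_3,\delta_1,\delta_2$ is needed.

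Next I would identify the left-hand side. Substituting the parameters into $\lambda(\alpha f(a)+(1-\alpha)f(b))+(1-\lambda)f(\alpha a+(1-\alpha)b)$ yields $\tfrac16 f(a)+\tfrac16 f(b)+\tfrac23 f(\tfrac{a+b}{2})=\tfrac16[f(a)+4f(\tfrac{a+b}{2})+f(b)]$, which is precisely the expression inside the absolute value in (\ref{2-18}). Since $(1-\alpha)b+\alpha a=\tfrac{a+b}{2}$, the quantities in (\ref{2-12a}) become $\delta_1=\tfrac12(|f'(\tfrac{a+b}{2})|^q+|f'(a)|^q)$ and $\delta_2=\tfrac12(|f'(\tfrac{a+b}{2})|^q+|f'(b)|^q)$, matching the two bracketed averages on the right of (\ref{2-18}).

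Then I would compute the $\varepsilon$ constants. Both $\varepsilon_1=(\alpha\lambda)^{p+1}+(1-\alpha-\alpha\lambda)^{p+1}$ and $\varepsilon_3=[\lambda(1-\alpha)]^{p+1}+[\alpha-\lambda(1-\alpha)]^{p+1}$ reduce to $(\tfrac16)^{p+1}+(\tfrac13)^{p+1}$, which I would rewrite as $(1+2^{p+1})/6^{p+1}$. The facts that $\varepsilon_1=\varepsilon_3$ and $(1-\alpha)^{1/q}=\alpha^{1/q}=(\tfrac12)^{1/q}$ let me pull a common constant out front, so the bound takes the form $(b-a)\,C\,(\delta_1^{1/q}+\delta_2^{1/q})$ with $C=(\tfrac12)^{1/q}(\tfrac1{p+1})^{1/p}\varepsilon_1^{1/p}$.

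The only place requiring care, and the step I expect to be the main (purely computational) obstacle, is showing $C=\tfrac1{12}(\tfrac{1+2^{p+1}}{3(p+1)})^{1/p}$. The key is to use $\tfrac1q=1-\tfrac1p$, so that $(\tfrac12)^{1/q}=\tfrac12\cdot2^{1/p}$, together with $6^{(p+1)/p}=6\cdot6^{1/p}$ to handle $\varepsilon_1^{1/p}=(1+2^{p+1})^{1/p}/(6\cdot6^{1/p})$; collecting the $1/p$-powers and absorbing the factor $2$ turns the denominator $6(p+1)$ into $3(p+1)$ and leaves the constant $\tfrac1{12}$. Substituting $C$ and the explicit $\delta_1,\delta_2$ back in then reproduces (\ref{2-18}) verbatim, and the final remark that this coincides with (\ref{1-2}) is immediate by comparing the two right-hand sides term by term.
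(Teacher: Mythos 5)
Your proposal is correct and is essentially the paper's own (implicit) argument: the corollary is stated as a direct specialization of Theorem~\ref{2.3} at $\alpha=\tfrac12$, $\lambda=\tfrac13$, and your case identification ($\alpha\lambda=\tfrac16\le\tfrac12\le\tfrac56$), the reduction $\varepsilon_1=\varepsilon_3=(1+2^{p+1})/6^{p+1}$, and the constant collapse $(\tfrac12)^{1/q}\bigl(\tfrac1{p+1}\bigr)^{1/p}\varepsilon_1^{1/p}=\tfrac1{12}\bigl(\tfrac{1+2^{p+1}}{3(p+1)}\bigr)^{1/p}$ via $\tfrac1q=1-\tfrac1p$ all check out. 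The final identification with (\ref{1-2}) is likewise correct, the two right-hand sides agreeing up to the order of the two summands.
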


\begin{remark}
We note that if we use convexity of $\left\vert f^{\prime }\right\vert ^{q}$
in the inequality (\ref{2-18}) then we obtain the inequality (\ref{1-2a}).
\end{remark}

\begin{corollary}
Let the assumptions of Theorem \ref{2.3} hold. Then for $\alpha =\frac{1}{2}$
and $\lambda =0,$ from the inequality (\ref{2-12}) we get the following
midpoint type inequality%
\begin{eqnarray*}
&&\left\vert f\left( \frac{a+b}{2}\right) -\frac{1}{b-a}\dint%
\limits_{a}^{b}f(x)dx\right\vert  \\
&\leq &\frac{b-a}{4}\left( \frac{1}{p+1}\right) ^{\frac{1}{p}}\left\{ \left( 
\frac{\left\vert f^{\prime }\left( \frac{a+b}{2}\right) \right\vert
^{q}+\left\vert f^{\prime }\left( a\right) \right\vert ^{q}}{2}\right) ^{%
\frac{1}{q}}+\left( \frac{\left\vert f^{\prime }\left( \frac{a+b}{2}\right)
\right\vert ^{q}+\left\vert f^{\prime }\left( b\right) \right\vert ^{q}}{2}%
\right) ^{\frac{1}{q}}\right\} .
\end{eqnarray*}
\end{corollary}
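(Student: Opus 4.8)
The plan is to specialize the three-case bound (\ref{2-12}) of Theorem \ref{2.3} to the parameters $\alpha=\frac{1}{2}$, $\lambda=0$, and then simplify the resulting constants using the conjugacy relation $\frac{1}{p}+\frac{1}{q}=1$. No new analytic input is required beyond Theorem \ref{2.3} itself: the entire argument is substitution followed by elementary algebra.

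First I would determine which of the three cases applies. With $\alpha=\frac{1}{2}$ and $\lambda=0$ one has $\alpha\lambda=0$, $1-\alpha=\frac{1}{2}$, and $1-\lambda\left(1-\alpha\right)=1$, so the chain $\alpha\lambda\leq 1-\alpha\leq 1-\lambda\left(1-\alpha\right)$ holds and we land in the first case of (\ref{2-12}), namely
\[
\left(b-a\right)\left(\frac{1}{p+1}\right)^{\frac{1}{p}}\left[\left(1-\alpha\right)^{\frac{1}{q}}\varepsilon_{1}^{\frac{1}{p}}\delta_{1}^{\frac{1}{q}}+\alpha^{\frac{1}{q}}\varepsilon_{3}^{\frac{1}{p}}\delta_{2}^{\frac{1}{q}}\right].
\]

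Next I would evaluate the auxiliary quantities at these parameter values. Since $\alpha\lambda=0$ and $1-\alpha-\alpha\lambda=\frac{1}{2}$, the definition gives $\varepsilon_{1}=\left(\frac{1}{2}\right)^{p+1}$; likewise $\lambda\left(1-\alpha\right)=0$ and $\alpha-\lambda\left(1-\alpha\right)=\frac{1}{2}$ yield $\varepsilon_{3}=\left(\frac{1}{2}\right)^{p+1}$. Because $\left(1-\alpha\right)b+\alpha a=\frac{a+b}{2}$, the quantities $\delta_{1},\delta_{2}$ from (\ref{2-12a}) reduce precisely to the two averages $\frac{\left\vert f^{\prime}\left(\frac{a+b}{2}\right)\right\vert^{q}+\left\vert f^{\prime}\left(a\right)\right\vert^{q}}{2}$ and $\frac{\left\vert f^{\prime}\left(\frac{a+b}{2}\right)\right\vert^{q}+\left\vert f^{\prime}\left(b\right)\right\vert^{q}}{2}$ appearing in the asserted bound.

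Finally I would collect the numerical prefactor common to both terms, namely $\left(1-\alpha\right)^{\frac{1}{q}}\varepsilon_{1}^{\frac{1}{p}}=\alpha^{\frac{1}{q}}\varepsilon_{3}^{\frac{1}{p}}=\left(\frac{1}{2}\right)^{\frac{1}{q}}\left(\frac{1}{2}\right)^{\frac{p+1}{p}}=\left(\frac{1}{2}\right)^{\frac{1}{q}+\frac{1}{p}+1}$. The only point worth flagging—and the sole place where care is needed in the bookkeeping—is that the conjugacy $\frac{1}{p}+\frac{1}{q}=1$ collapses this exponent to $2$, so the common factor equals exactly $\frac{1}{4}$. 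Pulling it out front reduces the bound to $\frac{b-a}{4}\left(\frac{1}{p+1}\right)^{\frac{1}{p}}\left[\delta_{1}^{\frac{1}{q}}+\delta_{2}^{\frac{1}{q}}\right]$, which is the claimed midpoint-type inequality. Thus there is no genuine obstacle here beyond tracking the exponents of $\frac{1}{2}$ correctly and invoking the conjugacy relation at the right moment.
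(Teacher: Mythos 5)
Your proposal is correct and follows exactly the route the paper intends: the corollary is an immediate specialization of Theorem \ref{2.3} at $\alpha=\frac{1}{2}$, $\lambda=0$, landing in the first case of (\ref{2-12}) since $\alpha\lambda=0\leq\frac{1}{2}\leq 1=1-\lambda\left(1-\alpha\right)$, with $\varepsilon_{1}=\varepsilon_{3}=\left(\frac{1}{2}\right)^{p+1}$ and the left-hand side collapsing to $f\left(\frac{a+b}{2}\right)$. Your bookkeeping of the common prefactor $\left(\frac{1}{2}\right)^{\frac{1}{q}+\frac{p+1}{p}}=\left(\frac{1}{2}\right)^{1+\frac{1}{p}+\frac{1}{q}}=\frac{1}{4}$ via the conjugacy relation is exactly right, and nothing further is needed.
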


\begin{corollary}
Let the assumptions of Theorem \ref{2.3} hold. Then for $\alpha =\frac{1}{2}$
and $\lambda =1,$ from the inequality (\ref{2-12}) we get the following
trapezoid type inequality%
\begin{eqnarray*}
&&\left\vert \frac{f\left( a\right) +f\left( b\right) }{2}-\frac{1}{b-a}%
\dint\limits_{a}^{b}f(x)dx\right\vert  \\
&\leq &\frac{b-a}{4}\left( \frac{1}{p+1}\right) ^{\frac{1}{p}}\left\{ \left( 
\frac{\left\vert f^{\prime }\left( \frac{a+b}{2}\right) \right\vert
^{q}+\left\vert f^{\prime }\left( a\right) \right\vert ^{q}}{2}\right) ^{%
\frac{1}{q}}+\left( \frac{\left\vert f^{\prime }\left( \frac{a+b}{2}\right)
\right\vert ^{q}+\left\vert f^{\prime }\left( b\right) \right\vert ^{q}}{2}%
\right) ^{\frac{1}{q}}\right\} .
\end{eqnarray*}
\end{corollary}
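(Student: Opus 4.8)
The plan is to obtain this trapezoid-type inequality as a direct specialization of Theorem \ref{2.3}, substituting $\alpha=\frac{1}{2}$ and $\lambda=1$ into inequality (\ref{2-12}) and simplifying. First I would evaluate the three comparison quantities that select the relevant case: with these values $\alpha\lambda=\frac{1}{2}$, $1-\alpha=\frac{1}{2}$, and $1-\lambda(1-\alpha)=\frac{1}{2}$. Since all three coincide, the point lies on the common boundary of the three cases, so each applies with equality and must return the same bound; I would simply work with the first case, $\alpha\lambda\leq 1-\alpha\leq 1-\lambda(1-\alpha)$.

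Next I would compute the constants appearing in that case. From (\ref{2-12a}) the argument $(1-\alpha)b+\alpha a$ becomes $\frac{a+b}{2}$, so $\delta_1=\frac{|f'(\frac{a+b}{2})|^q+|f'(a)|^q}{2}$ and $\delta_2=\frac{|f'(\frac{a+b}{2})|^q+|f'(b)|^q}{2}$, which are exactly the two averaged terms in the claimed bound. For the quantities $\varepsilon_1,\varepsilon_3$, the second summand in each definition vanishes, since $1-\alpha-\alpha\lambda=0$ and $\alpha-\lambda(1-\alpha)=0$; this leaves $\varepsilon_1=\varepsilon_3=\left(\frac{1}{2}\right)^{p+1}$, and hence $\varepsilon_1^{1/p}=\varepsilon_3^{1/p}=(1/2)^{(p+1)/p}$.

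Then I would assemble the bound. The prefactors satisfy $(1-\alpha)^{1/q}=\alpha^{1/q}=(1/2)^{1/q}$, so each of the two terms carries the common constant $(1/2)^{1/q}(1/2)^{(p+1)/p}=(1/2)^{1/q+1+1/p}$. The key simplification is that the conjugacy relation $\frac{1}{p}+\frac{1}{q}=1$ collapses this exponent to $2$, producing a factor $\frac{1}{4}$ in front of both terms. Factoring it out turns the bracket into $\frac{1}{4}\{\delta_1^{1/q}+\delta_2^{1/q}\}$, and together with the $(b-a)(\frac{1}{p+1})^{1/p}$ factor this reproduces the right-hand side of the claimed inequality.

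Finally I would check the left-hand side: at $\alpha=\frac{1}{2}$, $\lambda=1$, the expression $\lambda(\alpha f(a)+(1-\alpha)f(b))+(1-\lambda)f(\alpha a+(1-\alpha)b)$ reduces to $\frac{f(a)+f(b)}{2}$, confirming the trapezoid form. The only subtlety, and hardly a genuine obstacle, is the boundary coincidence of the three cases together with the vanishing of the degenerate $\varepsilon$-summands; once these are noted, the identity $\frac{1}{p}+\frac{1}{q}=1$ performs all the remaining arithmetic.
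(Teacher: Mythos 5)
Your proposal is correct and follows exactly the route the paper intends: the corollary is stated as a direct specialization of Theorem \ref{2.3}, and your substitution $\alpha=\frac{1}{2}$, $\lambda=1$ into (\ref{2-12}), with $\varepsilon_{1}=\varepsilon_{3}=\left(\frac{1}{2}\right)^{p+1}$, $\delta_{1},\delta_{2}$ as computed, and the exponent collapse $\left(\frac{1}{2}\right)^{\frac{1}{q}+1+\frac{1}{p}}=\frac{1}{4}$ via $\frac{1}{p}+\frac{1}{q}=1$, reproduces the stated bound. Your explicit check that all three cases of (\ref{2-12}) coincide at this boundary point ($\alpha\lambda=1-\alpha=1-\lambda\left(1-\alpha\right)=\frac{1}{2}$) is a careful touch the paper leaves implicit.
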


\begin{theorem}
\label{2.4}Let $f:I\subset \mathbb{R\rightarrow R}$ be a differentiable
mapping on $I^{\circ }$ such that $f^{\prime }\in L[a,b]$, where $a,b\in
I^{\circ }$ with $a<b$ and $\alpha ,\lambda \in \left[ 0,1\right] $. If $%
\left\vert f^{\prime }\right\vert ^{q}$ is convex on $[a,b]$, $q>1,$ then
the following inequality holds,%
\begin{equation}
\left\vert \lambda \left( \alpha f(a)+\left( 1-\alpha \right) f(b)\right)
+\left( 1-\lambda \right) f(\alpha a+\left( 1-\alpha \right) b)-\frac{1}{b-a}%
\dint\limits_{a}^{b}f(x)dx\right\vert \leq \left( b-a\right)  \label{2-18a}
\end{equation}%
\begin{equation*}
\times \left( \frac{1}{p+1}\right) ^{\frac{1}{p}}\left\{ 
\begin{array}{cc}
\left[ \varepsilon _{1}^{\frac{1}{p}}\delta _{3}^{\frac{1}{q}}+\varepsilon
_{3}^{\frac{1}{p}}\delta _{4}^{\frac{1}{q}}\right] , & \alpha \lambda \leq
1-\alpha \leq 1-\lambda \left( 1-\alpha \right) \\ 
\left[ \varepsilon _{1}^{\frac{1}{p}}\delta _{3}^{\frac{1}{q}}+\varepsilon
_{4}^{\frac{1}{p}}\delta _{4}^{\frac{1}{q}}\right] , & \alpha \lambda \leq
1-\lambda \left( 1-\alpha \right) \leq 1-\alpha \\ 
\left[ \varepsilon _{2}^{\frac{1}{p}}\delta _{3}^{\frac{1}{q}}+\varepsilon
_{3}^{\frac{1}{p}}\delta _{4}^{\frac{1}{q}}\right] , & 1-\alpha \leq \alpha
\lambda \leq 1-\lambda \left( 1-\alpha \right)%
\end{array}%
\right.
\end{equation*}%
where 
\begin{eqnarray*}
\delta _{3} &=&\frac{\left\vert f^{\prime }\left( b\right) \right\vert
^{q}\left( 1-\alpha \right) ^{2}+(1-\alpha ^{2})\left\vert f^{\prime }\left(
a\right) \right\vert ^{q}dt}{2},\ \delta _{4}=\frac{\left\vert f^{\prime
}\left( b\right) \right\vert ^{q}\alpha \left( 2-\alpha \right) +\alpha
^{2}\left\vert f^{\prime }\left( a\right) \right\vert ^{q}dt}{2}, \\
\varepsilon _{1} &=&\left( \alpha \lambda \right) ^{p+1}+\left( 1-\alpha
-\alpha \lambda \right) ^{p+1},\ \varepsilon _{2}=\left( \alpha \lambda
\right) ^{p+1}-\left( \alpha \lambda -1+\alpha \right) ^{p+1}, \\
\varepsilon _{3} &=&\left[ \lambda \left( 1-\alpha \right) \right] ^{p+1}+%
\left[ \alpha -\lambda \left( 1-\alpha \right) \right] ^{p+1},\ \varepsilon
_{4}=\left[ \lambda \left( 1-\alpha \right) \right] ^{p+1}-\left[ \lambda
\left( 1-\alpha \right) -\alpha \right] ^{p+1},
\end{eqnarray*}%
and $\frac{1}{p}+\frac{1}{q}=1.$
\end{theorem}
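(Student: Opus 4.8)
The plan is to follow the proof of Theorem \ref{2.3} almost verbatim; the only change is how the factor $\left(\int|f^{\prime}(tb+(1-t)a)|^{q}\,dt\right)^{1/q}$ produced by H\"older's inequality is estimated. Here I would bound it by the convexity of $\left\vert f^{\prime}\right\vert^{q}$ directly, rather than by the Hermite--Hadamard inequality (\ref{1-1}) as was done in Theorem \ref{2.3}. This is precisely the source of the new quantities $\delta_{3}$ and $\delta_{4}$ replacing $\delta_{1}$ and $\delta_{2}$.

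First I would start from the identity in Lemma \ref{2.1}, take absolute values, and apply the triangle inequality to split the right-hand side into the integral over $[0,1-\alpha]$ with weight $\left\vert t-\alpha\lambda\right\vert$ and the integral over $[1-\alpha,1]$ with weight $\left\vert t-1+\lambda(1-\alpha)\right\vert$. To each of these I would apply H\"older's inequality with exponents $p,q$, exactly as in (\ref{2-13}), separating the weight raised to the power $p$ from $\left\vert f^{\prime}(tb+(1-t)a)\right\vert^{q}$.

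Next, the two weight factors $\int_{0}^{1-\alpha}\left\vert t-\alpha\lambda\right\vert^{p}\,dt$ and $\int_{1-\alpha}^{1}\left\vert t-1+\lambda(1-\alpha)\right\vert^{p}\,dt$ are exactly the quantities already evaluated in (\ref{2-16}) and (\ref{2-17}): splitting each absolute value at its zero according to the sign of $\alpha\lambda-(1-\alpha)$ and of $\alpha-\lambda(1-\alpha)$ produces $\varepsilon_{1}$ or $\varepsilon_{2}$, and $\varepsilon_{3}$ or $\varepsilon_{4}$, which is what generates the three cases in the statement. For the remaining factors I would instead invoke the convexity bound $\left\vert f^{\prime}(tb+(1-t)a)\right\vert^{q}\le t\left\vert f^{\prime}(b)\right\vert^{q}+(1-t)\left\vert f^{\prime}(a)\right\vert^{q}$ for $t\in[0,1]$ and integrate the resulting linear function term by term. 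A short computation gives $\int_{0}^{1-\alpha}t\,dt=\tfrac{(1-\alpha)^{2}}{2}$ and $\int_{0}^{1-\alpha}(1-t)\,dt=\tfrac{1-\alpha^{2}}{2}$, so the first factor is bounded by $\delta_{3}$; likewise $\int_{1-\alpha}^{1}t\,dt=\tfrac{\alpha(2-\alpha)}{2}$ and $\int_{1-\alpha}^{1}(1-t)\,dt=\tfrac{\alpha^{2}}{2}$, so the second is bounded by $\delta_{4}$.

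Finally I would substitute these estimates back into the H\"older splitting, collecting the common factor $\left(\tfrac{1}{p+1}\right)^{1/p}$ out of the weight integrals, to obtain (\ref{2-18a}) in each of the three cases. I do not expect a genuine obstacle: the argument is structurally identical to Theorem \ref{2.3}, and the only point requiring care is the bookkeeping of the signs of the weights so that the correct $\varepsilon_{i}$ is paired with $\delta_{3}$ and $\delta_{4}$ in each case. Note, moreover, that unlike in Theorem \ref{2.3} the factors $\delta_{3}$ and $\delta_{4}$ do not vary with the case, since they come from integrating $\left\vert f^{\prime}\right\vert^{q}$ over the fixed subintervals $[0,1-\alpha]$ and $[1-\alpha,1]$; only the weight factors $\varepsilon_{i}$ change from case to case.
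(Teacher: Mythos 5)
Your proposal matches the paper's own proof essentially step for step: H\"older's inequality exactly as in (\ref{2-13}), the weight integrals (\ref{2-16})--(\ref{2-17}) supplying the $\varepsilon_{i}$ and the factor $\left(\frac{1}{p+1}\right)^{1/p}$, and the direct convexity bound $\left\vert f^{\prime}(tb+(1-t)a)\right\vert^{q}\leq t\left\vert f^{\prime}(b)\right\vert^{q}+(1-t)\left\vert f^{\prime}(a)\right\vert^{q}$ integrated over $[0,1-\alpha]$ and $[1-\alpha,1]$ to produce $\delta_{3}$ and $\delta_{4}$ (your elementary integrals are all correct, and your remark that this replaces the Hermite--Hadamard step of Theorem \ref{2.3} is exactly the paper's point). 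The only quibble is your closing aside: in Theorem \ref{2.3} the factors $\delta_{1},\delta_{2}$ are likewise case-independent, so that contrast is not quite right, but it plays no role in the argument.
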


\begin{proof}
From Lemma \ref{2.1} and by H\"{o}lder's integral inequality, we have the
inequality (\ref{2-13}). Since Since $\left\vert f^{\prime }\right\vert ^{q}$
is convex on $[a,b],$ we know that for $t\in \left[ 0,1-\alpha \right] $ and 
$t\in \left[ 1-\alpha ,1\right] $ 
\begin{equation*}
\left\vert f^{\prime }\left( tb+(1-t)a\right) \right\vert ^{q}\leq
t\left\vert f^{\prime }\left( b\right) \right\vert ^{q}+(1-t)\left\vert
f^{\prime }\left( a\right) \right\vert ^{q}.
\end{equation*}%
Hence 
\begin{eqnarray*}
&&\left\vert \lambda \left( \alpha f(a)+\left( 1-\alpha \right) f(b)\right)
+\left( 1-\lambda \right) f(\alpha a+\left( 1-\alpha \right) b)-\frac{1}{b-a}%
\dint\limits_{a}^{b}f(x)dx\right\vert \\
&\leq &\left( b-a\right) \left\{ \left( \dint\limits_{0}^{1-\alpha
}\left\vert t-\alpha \lambda \right\vert ^{p}dt\right) ^{\frac{1}{p}}\left(
\dint\limits_{0}^{1-\alpha }t\left\vert f^{\prime }\left( b\right)
\right\vert ^{q}+(1-t)\left\vert f^{\prime }\left( a\right) \right\vert
^{q}dt\right) ^{\frac{1}{q}}\right. \\
&&+\left. \left( \dint\limits_{1-\alpha }^{1}\left\vert t-1+\lambda \left(
1-\alpha \right) \right\vert ^{p}dt\right) ^{\frac{1}{p}}\left(
\dint\limits_{1-\alpha }^{1}t\left\vert f^{\prime }\left( b\right)
\right\vert ^{q}+(1-t)\left\vert f^{\prime }\left( a\right) \right\vert
^{q}dt\right) ^{\frac{1}{q}}\right\} \\
&\leq &\left( b-a\right) \left\{ \left( \dint\limits_{0}^{1-\alpha
}\left\vert t-\alpha \lambda \right\vert ^{p}dt\right) ^{\frac{1}{p}}\left( 
\frac{\left\vert f^{\prime }\left( b\right) \right\vert ^{q}\left( 1-\alpha
\right) ^{2}+(1-\alpha ^{2})\left\vert f^{\prime }\left( a\right)
\right\vert ^{q}dt}{2}\right) ^{\frac{1}{q}}\right.
\end{eqnarray*}%
\begin{equation}
+\left. \left( \dint\limits_{1-\alpha }^{1}\left\vert t-1+\lambda \left(
1-\alpha \right) \right\vert ^{p}dt\right) ^{\frac{1}{p}}\left( \frac{%
\left\vert f^{\prime }\left( b\right) \right\vert ^{q}\alpha \left( 2-\alpha
\right) +\alpha ^{2}\left\vert f^{\prime }\left( a\right) \right\vert ^{q}dt%
}{2}\right) ^{\frac{1}{q}}\right\} .  \label{2-19}
\end{equation}%
thus, using (\ref{2-16}),(\ref{2-17}) in (\ref{2-19}), we obtain the
inequality (\ref{2-18a}). This completes the proof.
\end{proof}

\begin{corollary}
Let the assumptions of Theorem \ref{2.4} hold. Then for $\alpha =\frac{1}{2}$
and $\lambda =\frac{1}{3}$, from the inequality (\ref{2-12}) we get the
following Simpson type inequality%
\begin{eqnarray*}
&&\left\vert \frac{1}{6}\left[ f(a)+4f\left( \frac{a+b}{2}\right) +f(b)%
\right] -\frac{1}{b-a}\dint\limits_{a}^{b}f(x)dx\right\vert \\
&\leq &\frac{b-a}{12}\left( \frac{1+2^{p+1}}{3\left( p+1\right) }\right) ^{%
\frac{1}{p}}\left\{ \left( \frac{3\left\vert f^{\prime }(b)\right\vert
^{q}+\left\vert f^{\prime }\left( a\right) \right\vert ^{q}}{4}\right) ^{%
\frac{1}{q}}+\left( \frac{3\left\vert f^{\prime }(a)\right\vert
^{q}+\left\vert f^{\prime }\left( b\right) \right\vert ^{q}}{4}\right) ^{%
\frac{1}{q}}\right\} ,
\end{eqnarray*}%
which is the same of the inequality (\ref{1-2a}).
\end{corollary}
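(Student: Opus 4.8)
The plan is to specialise the inequality (\ref{2-18a}) of Theorem \ref{2.4} to the parameter values $\alpha=\frac{1}{2}$ and $\lambda=\frac{1}{3}$; there is nothing to prove beyond Theorem \ref{2.4}, and the entire content is the bookkeeping of this substitution. First I would simplify the left-hand side. With these values $\alpha a+(1-\alpha)b=\frac{a+b}{2}$, and
\[
\lambda\left(\alpha f(a)+(1-\alpha)f(b)\right)+(1-\lambda)f\left(\alpha a+(1-\alpha)b\right)=\frac{1}{6}f(a)+\frac{1}{6}f(b)+\frac{2}{3}f\left(\frac{a+b}{2}\right),
\]
which is exactly $\frac{1}{6}\left[f(a)+4f\left(\frac{a+b}{2}\right)+f(b)\right]$, so the quantity inside the absolute value on the left of (\ref{2-18a}) becomes the Simpson expression.

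Next I would identify the active case. Computing $\alpha\lambda=\frac{1}{6}$, $1-\alpha=\frac{1}{2}$, and $1-\lambda(1-\alpha)=\frac{5}{6}$, one has $\frac{1}{6}\leq\frac{1}{2}\leq\frac{5}{6}$, i.e. $\alpha\lambda\leq 1-\alpha\leq 1-\lambda(1-\alpha)$, so the first branch $\bigl[\varepsilon_{1}^{1/p}\delta_{3}^{1/q}+\varepsilon_{3}^{1/p}\delta_{4}^{1/q}\bigr]$ of (\ref{2-18a}) applies. I would then evaluate the constants. Since $1-\alpha-\alpha\lambda=\frac{1}{3}$ and $\alpha-\lambda(1-\alpha)=\frac{1}{3}$ while $\alpha\lambda=\lambda(1-\alpha)=\frac{1}{6}$, both $\varepsilon_{1}$ and $\varepsilon_{3}$ equal $\left(\frac{1}{6}\right)^{p+1}+\left(\frac{1}{3}\right)^{p+1}=\frac{1+2^{p+1}}{6^{p+1}}$. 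For the weights, $(1-\alpha)^{2}=\alpha^{2}=\frac{1}{4}$ and $1-\alpha^{2}=\alpha(2-\alpha)=\frac{3}{4}$ give $\delta_{3}=\frac{|f'(b)|^{q}+3|f'(a)|^{q}}{8}$ and $\delta_{4}=\frac{3|f'(b)|^{q}+|f'(a)|^{q}}{8}$.

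The only genuinely delicate step is reassembling the constant so that it matches (\ref{1-2a}). Writing $\frac{1}{8}=\frac{1}{2}\cdot\frac{1}{4}$ pulls a factor $2^{-1/q}$ out of each of $\delta_{3}^{1/q}$ and $\delta_{4}^{1/q}$, leaving precisely the denominator-$4$ averages $\left(\frac{3|f'(a)|^{q}+|f'(b)|^{q}}{4}\right)^{1/q}$ and $\left(\frac{3|f'(b)|^{q}+|f'(a)|^{q}}{4}\right)^{1/q}$ that appear in the target. Combining $\left(\frac{1}{p+1}\right)^{1/p}$ with $\varepsilon_{1}^{1/p}=\varepsilon_{3}^{1/p}$ and factoring $6^{(p+1)/p}=6\cdot 2^{1/p}3^{1/p}$ out of the denominator exposes a further factor $2^{-1/p}$; the conjugacy relation $\frac{1}{p}+\frac{1}{q}=1$ then gives $2^{-1/p}2^{-1/q}=\frac{1}{2}$, which converts the integer $6$ into $12$, while the surviving factor is $\frac{(1+2^{p+1})^{1/p}}{3^{1/p}(p+1)^{1/p}}=\left(\frac{1+2^{p+1}}{3(p+1)}\right)^{1/p}$. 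Multiplying back by $(b-a)$ reproduces the right-hand side of (\ref{1-2a}) exactly, which completes the argument. The main obstacle, such as it is, lies only in keeping the powers of $2$, $3$ and $6$ straight and invoking $\frac{1}{p}+\frac{1}{q}=1$ at the right moment; no new analytic idea is required.
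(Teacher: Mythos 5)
Your proposal is correct and coincides with the paper's (implicit) proof: the corollary is a direct specialization of Theorem \ref{2.4} at $\alpha=\tfrac12$, $\lambda=\tfrac13$, and your case identification ($\tfrac16\leq\tfrac12\leq\tfrac56$), the values $\varepsilon_{1}=\varepsilon_{3}=\tfrac{1+2^{p+1}}{6^{p+1}}$, $\delta_{3},\delta_{4}$, and the reassembly via $2^{-1/p}2^{-1/q}=\tfrac12$ all check out and reproduce (\ref{1-2a}) exactly. You also rightly read the corollary's reference to (\ref{2-12}) as a typo for (\ref{2-18a}), which is what the hypotheses of Theorem \ref{2.4} require.
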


\section{Some applications for special means}

Let us recall the following special means of arbitrary real numbers $a,b$
with $a\neq b$ and $\alpha \in \left[ 0,1\right] :$

\begin{enumerate}
\item The weighted arithmetic mean%
\begin{equation*}
A_{\alpha }\left( a,b\right) :=\alpha a+(1-\alpha )b,~a,b\in 
%TCIMACRO{\U{211d} }%
%BeginExpansion
\mathbb{R}
%EndExpansion
.
\end{equation*}

\item The unweighted arithmetic mean%
\begin{equation*}
A\left( a,b\right) :=\frac{a+b}{2},~a,b\in 
%TCIMACRO{\U{211d} }%
%BeginExpansion
\mathbb{R}
%EndExpansion
.
\end{equation*}

\item The weighted geometric mean%
\begin{equation*}
G_{\alpha }(a,b)=a^{\alpha }b^{1-\alpha },\ a,b>0.
\end{equation*}

\item The unweighted geometric mean%
\begin{equation*}
G(a,b)=\sqrt{ab},\ a,b>0.
\end{equation*}

\item The weighted harmonic mean%
\begin{equation*}
H_{\alpha }\left( a,b\right) :=\left( \frac{\alpha }{a}+\frac{1-\alpha }{b}%
\right) ^{-1},\ \ a,b\in 
%TCIMACRO{\U{211d} }%
%BeginExpansion
\mathbb{R}
%EndExpansion
\backslash \left\{ 0\right\} .
\end{equation*}

\item The unweighted harmonic mean%
\begin{equation*}
H\left( a,b\right) :=\frac{2ab}{a+b},\ \ a,b\in 
%TCIMACRO{\U{211d} }%
%BeginExpansion
\mathbb{R}
%EndExpansion
\backslash \left\{ 0\right\} .
\end{equation*}

\item The Logarithmic mean%
\begin{equation*}
L\left( a,b\right) :=\frac{b-a}{\ln \left\vert b\right\vert -\ln \left\vert
a\right\vert },\ \ \left\vert a\right\vert \neq \left\vert b\right\vert ,\
ab\neq 0.
\end{equation*}

\item Then n-Logarithmic mean%
\begin{equation*}
L_{n}\left( a,b\right) :=\ \left( \frac{b^{n+1}-a^{n+1}}{(n+1)(b-a)}\right)
^{\frac{1}{n}}\ ,\ n\in 
%TCIMACRO{\U{2124} }%
%BeginExpansion
\mathbb{Z}
%EndExpansion
\backslash \left\{ -1,0\right\} ,\ a,b\in 
%TCIMACRO{\U{211d} }%
%BeginExpansion
\mathbb{R}
%EndExpansion
,\ a\neq b.
\end{equation*}

\item The identric mean%
\begin{equation*}
I(a,b)=\frac{1}{e}\left( \frac{b^{b}}{a^{a}}\right) ^{\frac{1}{b-a}},\
a,b>0,\ a\neq b.
\end{equation*}
\end{enumerate}

\begin{proposition}
Let $a,b\in 
%TCIMACRO{\U{211d} }%
%BeginExpansion
\mathbb{R}
%EndExpansion
$ with $a<b,\ 0\notin \left[ a,b\right] $ and $n\in 
%TCIMACRO{\U{2124} }%
%BeginExpansion
\mathbb{Z}
%EndExpansion
,\ \left\vert n\right\vert \geq 2.$ Then, for $\alpha ,\lambda \in \left[ 0,1%
\right] $ and $q\geq 1,$we have the following inequality:%
\begin{eqnarray*}
&&\left\vert \lambda A_{\alpha }\left( a^{n},b^{n}\right) +\left( 1-\lambda
\right) A_{\alpha }^{n}\left( a,b\right) -L_{n}^{n}\left( a,b\right)
\right\vert \\
&\leq &\left\{ 
\begin{array}{cc}
\begin{array}{c}
\left( b-a\right) \left\vert n\right\vert \left\{ \gamma _{2}^{1-\frac{1}{q}%
}\left( \mu _{1}\left\vert b\right\vert ^{(n-1)q}+\mu _{2}\left\vert
a\right\vert ^{(n-1)q}\right) ^{\frac{1}{q}}\right. \\ 
\left. +\upsilon _{2}^{1-\frac{1}{q}}\left( \eta _{3}\left\vert b\right\vert
^{(n-1)q}+\eta _{4}\left\vert a\right\vert ^{(n-1)q}\right) ^{\frac{1}{q}%
}\right\} ,%
\end{array}
& \alpha \lambda \leq 1-\alpha \leq 1-\lambda \left( 1-\alpha \right) \\ 
\begin{array}{c}
\left( b-a\right) \left\vert n\right\vert \left\{ \gamma _{2}^{1-\frac{1}{q}%
}\left( \mu _{1}\left\vert b\right\vert ^{(n-1)q}+\mu _{2}\left\vert
a\right\vert ^{(n-1)q}\right) ^{\frac{1}{q}}\right. \\ 
\left. +\upsilon _{1}^{1-\frac{1}{q}}\left( \eta _{1}\left\vert b\right\vert
^{(n-1)q}+\eta _{2}\left\vert a\right\vert ^{(n-1)q}\right) ^{\frac{1}{q}%
}\right\} ,%
\end{array}
& \alpha \lambda \leq 1-\lambda \left( 1-\alpha \right) \leq 1-\alpha \\ 
\begin{array}{c}
\left( b-a\right) \left\vert n\right\vert \left\{ \gamma _{1}^{1-\frac{1}{q}%
}\left( \mu _{3}\left\vert b\right\vert ^{(n-1)q}+\mu _{4}\left\vert
a\right\vert ^{(n-1)q}\right) ^{\frac{1}{q}}\right. \\ 
\left. +\upsilon _{2}^{1-\frac{1}{q}}\left( \eta _{3}\left\vert b\right\vert
^{(n-1)q}+\eta _{4}\left\vert a\right\vert ^{(n-1)q}\right) ^{\frac{1}{q}%
}\right\} ,%
\end{array}
& 1-\alpha \leq \alpha \lambda \leq 1-\lambda \left( 1-\alpha \right)%
\end{array}%
\right. ,
\end{eqnarray*}%
where $\gamma _{1},\ \gamma _{2},\ \upsilon _{1},\ \ \upsilon _{2},$ $\mu
_{1},\ \mu _{2},\ \mu _{3},\ \mu _{4},\ \eta _{1},\ \eta _{2},\ \eta _{3},\
\eta _{4}$ numbers are defined as in (\ref{2-2a})-(\ref{2-2e}).
\end{proposition}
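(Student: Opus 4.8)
The plan is to apply Theorem \ref{2.2} to the specific function $f(x)=x^{n}$ on $[a,b]$. The point of this choice is that it makes every term on the left-hand side of (\ref{2-2}) coincide with one of the stated special means. Indeed, $f(a)=a^{n}$ and $f(b)=b^{n}$, so $\alpha f(a)+(1-\alpha)f(b)=A_{\alpha}(a^{n},b^{n})$; moreover $f(\alpha a+(1-\alpha)b)=(\alpha a+(1-\alpha)b)^{n}=A_{\alpha}^{n}(a,b)$; and since $\int_{a}^{b}x^{n}\,dx=\frac{b^{n+1}-a^{n+1}}{n+1}$, we obtain $\frac{1}{b-a}\int_{a}^{b}f(x)\,dx=L_{n}^{n}(a,b)$. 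Hence the absolute value appearing in (\ref{2-2}) becomes exactly $\left\vert\lambda A_{\alpha}(a^{n},b^{n})+(1-\lambda)A_{\alpha}^{n}(a,b)-L_{n}^{n}(a,b)\right\vert$, the quantity to be estimated.

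Before invoking Theorem \ref{2.2}, I must verify its hypothesis that $\left\vert f^{\prime}\right\vert^{q}$ is convex on $[a,b]$. Here $f^{\prime}(x)=nx^{n-1}$, so $\left\vert f^{\prime}(x)\right\vert^{q}=\left\vert n\right\vert^{q}\left\vert x\right\vert^{(n-1)q}$. Since $0\notin[a,b]$, the function $g(x)=\left\vert x\right\vert^{(n-1)q}$ is smooth on $[a,b]$, and a direct computation (valid for $x>0$ and $x<0$ alike) gives $g^{\prime\prime}(x)=(n-1)q\left[(n-1)q-1\right]\left\vert x\right\vert^{(n-1)q-2}$. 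The hypotheses $\left\vert n\right\vert\geq2$ and $q\geq1$ force $g^{\prime\prime}\geq0$: if $n\geq2$ then $(n-1)q\geq1$, so both factors $(n-1)q$ and $(n-1)q-1$ are nonnegative; if $n\leq-2$ then $(n-1)q\leq-3$, so both factors are negative and their product is again positive. In either case $\left\vert f^{\prime}\right\vert^{q}$ is convex on $[a,b]$, and Theorem \ref{2.2} applies.

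With the hypotheses verified, I substitute $\left\vert f^{\prime}(b)\right\vert^{q}=\left\vert n\right\vert^{q}\left\vert b\right\vert^{(n-1)q}$ and $\left\vert f^{\prime}(a)\right\vert^{q}=\left\vert n\right\vert^{q}\left\vert a\right\vert^{(n-1)q}$ into the right-hand side of (\ref{2-2}). In each of the three cases the common factor $\left\vert n\right\vert^{q}$ pulls out of every bracket of the form $\mu_{i}\left\vert f^{\prime}(b)\right\vert^{q}+\mu_{j}\left\vert f^{\prime}(a)\right\vert^{q}$ (and of the corresponding $\eta$-brackets); raising to the power $1/q$ converts it to a factor $\left\vert n\right\vert$, which combines with the leading $(b-a)$ to produce the prefactor $(b-a)\left\vert n\right\vert$ shown in all three branches. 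Since the constants $\gamma_{i},\upsilon_{i},\mu_{i},\eta_{i}$ defined in (\ref{2-2a})--(\ref{2-2e}) are independent of $f$, the three case conditions transfer verbatim, yielding the asserted inequality.

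The only step demanding genuine care is the convexity check in the second paragraph, and in particular the subcase $n\leq-2$, where convexity survives precisely because $g^{\prime\prime}$ is a product of two negative factors; the remainder of the argument is bookkeeping that matches the structure of (\ref{2-2}) term by term.
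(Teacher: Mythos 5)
Your proof is correct and follows exactly the paper's route: the paper likewise proves this proposition by applying Theorem \ref{2.2} to $f(x)=x^{n}$, merely stating the application without detail. Your added verification that $\left\vert f^{\prime}(x)\right\vert ^{q}=\left\vert n\right\vert ^{q}\left\vert x\right\vert ^{(n-1)q}$ is convex on $[a,b]$ when $0\notin \left[ a,b\right] $ and $\left\vert n\right\vert \geq 2$ (including the $n\leq -2$ subcase) is a welcome filling-in of a step the paper leaves implicit.
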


\begin{proof}
The assertion follows from Theorem \ref{2.2}, for$\ f(x)=x^{n},\ x\in 
%TCIMACRO{\U{211d} }%
%BeginExpansion
\mathbb{R}
%EndExpansion
,\ n\in 
%TCIMACRO{\U{2124} }%
%BeginExpansion
\mathbb{Z}
%EndExpansion
,\ \left\vert n\right\vert \geq 2.$
\end{proof}

\begin{proposition}
Let $a,b\in 
%TCIMACRO{\U{211d} }%
%BeginExpansion
\mathbb{R}
%EndExpansion
$ with $a<b,\ 0\notin \left[ a,b\right] ,$ and $n\in 
%TCIMACRO{\U{2124} }%
%BeginExpansion
\mathbb{Z}
%EndExpansion
,\ \left\vert n\right\vert \geq 2.$ Then, for $\alpha ,\lambda \in \left[ 0,1%
\right] $ and $q>1,$ we have the following inequality:%
\begin{equation*}
\left\vert \lambda A_{\alpha }\left( a^{n},b^{n}\right) +\left( 1-\lambda
\right) A_{\alpha }^{n}\left( a,b\right) -L_{n}^{n}\left( a,b\right)
\right\vert \leq \left( b-a\right) \left( \frac{1}{p+1}\right) ^{\frac{1}{p}%
}\left\vert n\right\vert
\end{equation*}%
\begin{equation*}
\times \left\{ 
\begin{array}{cc}
\left[ \left( 1-\alpha \right) ^{\frac{1}{q}}\varepsilon _{1}^{\frac{1}{p}%
}\theta _{1}+\alpha ^{\frac{1}{q}}\varepsilon _{3}^{\frac{1}{p}}\theta _{2}%
\right] , & \alpha \lambda \leq 1-\alpha \leq 1-\lambda \left( 1-\alpha
\right) \\ 
\left[ \left( 1-\alpha \right) ^{\frac{1}{q}}\varepsilon _{1}^{\frac{1}{p}%
}\theta _{1}+\alpha ^{\frac{1}{q}}\varepsilon _{4}^{\frac{1}{p}}\theta _{2}%
\right] , & \alpha \lambda \leq 1-\lambda \left( 1-\alpha \right) \leq
1-\alpha \\ 
\left[ \left( 1-\alpha \right) ^{\frac{1}{q}}\varepsilon _{2}^{\frac{1}{p}%
}\theta _{1}+\alpha ^{\frac{1}{q}}\varepsilon _{3}^{\frac{1}{p}}\theta _{2}%
\right] , & 1-\alpha \leq \alpha \lambda \leq 1-\lambda \left( 1-\alpha
\right)%
\end{array}%
\right. ,
\end{equation*}%
where 
\begin{equation*}
\theta _{1}=A^{\frac{1}{q}}\left( \left\vert A_{\alpha }^{(n-1)q}\left(
a,b\right) \right\vert ,\left\vert a\right\vert ^{(n-1)q}\right) ,\ \theta
_{2}=A^{\frac{1}{q}}\left( \left\vert A_{\alpha }^{(n-1)q}\left( a,b\right)
\right\vert ,\left\vert b\right\vert ^{(n-1)q}\right) ,\ \frac{1}{p}+\frac{1%
}{q}=1,
\end{equation*}%
and $\varepsilon _{1},\ \varepsilon _{2},\ \varepsilon _{3},\ \varepsilon
_{4}$ numbers are defined as in (\ref{2-12a}).
\end{proposition}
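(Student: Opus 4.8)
The plan is to specialize Theorem~\ref{2.3} to the power function $f(x)=x^{n}$ on $[a,b]$, just as the preceding proposition specializes Theorem~\ref{2.2}. First I would check the hypotheses: since $0\notin[a,b]$, the map $f(x)=x^{n}$ is differentiable on a neighborhood of $[a,b]$ and $f^{\prime }\in L[a,b]$, so the only substantive point is the convexity of $\left\vert f^{\prime }\right\vert ^{q}$, which I postpone to the end. Granting this, Theorem~\ref{2.3} applies for the given $\alpha ,\lambda \in [0,1]$ and $q>1$.

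Next I would evaluate the three ingredients of the left-hand side of \eqref{2-12}. Using $A_{\alpha }(a,b)=\alpha a+(1-\alpha )b$ we have $f(\alpha a+(1-\alpha )b)=A_{\alpha }^{n}(a,b)$ and $\alpha f(a)+(1-\alpha )f(b)=\alpha a^{n}+(1-\alpha )b^{n}=A_{\alpha }(a^{n},b^{n})$, while
\[
\frac{1}{b-a}\dint\limits_{a}^{b}x^{n}\,dx=\frac{b^{n+1}-a^{n+1}}{(n+1)(b-a)}=L_{n}^{n}(a,b).
\]
Hence the quantity inside the absolute value in \eqref{2-12} becomes exactly $\lambda A_{\alpha }(a^{n},b^{n})+(1-\lambda )A_{\alpha }^{n}(a,b)-L_{n}^{n}(a,b)$, matching the left-hand side of the proposition.

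It then remains to identify the right-hand side. Since $f^{\prime }(x)=nx^{n-1}$, we get $\left\vert f^{\prime }(x)\right\vert ^{q}=\left\vert n\right\vert ^{q}\left\vert x\right\vert ^{(n-1)q}$, and because $(1-\alpha )b+\alpha a=A_{\alpha }(a,b)$ the quantities $\delta _{1},\delta _{2}$ of \eqref{2-12a} become
\[
\delta _{1}=\left\vert n\right\vert ^{q}\,\frac{\left\vert A_{\alpha }(a,b)\right\vert ^{(n-1)q}+\left\vert a\right\vert ^{(n-1)q}}{2},\qquad \delta _{2}=\left\vert n\right\vert ^{q}\,\frac{\left\vert A_{\alpha }(a,b)\right\vert ^{(n-1)q}+\left\vert b\right\vert ^{(n-1)q}}{2}.
\]
Thus $\delta _{1}^{1/q}=\left\vert n\right\vert \,\theta _{1}$ and $\delta _{2}^{1/q}=\left\vert n\right\vert \,\theta _{2}$ with $\theta _{1},\theta _{2}$ as in the statement, so the common factor $\left\vert n\right\vert$ pulls out of every branch of the case distinction; substituting these into the three cases of \eqref{2-12} reproduces the claimed bound, with the case conditions carried over verbatim.

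The one step that genuinely needs care—and the only real obstacle—is verifying that $\left\vert f^{\prime }\right\vert ^{q}=\left\vert n\right\vert ^{q}\left\vert x\right\vert ^{(n-1)q}$ is convex on $[a,b]$. On an interval avoiding the origin the sign of $x$ is constant, so $\left\vert x\right\vert ^{(n-1)q}$ is the power map $u\mapsto u^{(n-1)q}$ on $(0,\infty )$ composed with the affine change of variable $x\mapsto \pm x$. Because $\left\vert n\right\vert \geq 2$ and $q>1$, the exponent satisfies either $(n-1)q\geq q>1$ (when $n\geq 2$) or $(n-1)q<0$ (when $n\leq -2$); in both regimes $u\mapsto u^{(n-1)q}$ is convex on $(0,\infty )$, and composition with an affine map preserves convexity. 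This confirms the hypothesis of Theorem~\ref{2.3} and completes the argument.
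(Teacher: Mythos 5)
Your proof is correct and takes essentially the same route as the paper, whose entire proof is the one-line remark that the assertion follows from Theorem~\ref{2.3} with $f(x)=x^{n}$, $\left\vert n\right\vert \geq 2$. You merely make explicit the verifications the paper leaves implicit --- identifying the means on the left-hand side of (\ref{2-12}), computing $\delta _{1}^{1/q}=\left\vert n\right\vert \theta _{1}$ and $\delta _{2}^{1/q}=\left\vert n\right\vert \theta _{2}$ so the factor $\left\vert n\right\vert $ pulls out of each branch, and checking convexity of $\left\vert f^{\prime }\right\vert ^{q}=\left\vert n\right\vert ^{q}\left\vert x\right\vert ^{(n-1)q}$ on an interval avoiding the origin via the exponent ranges $(n-1)q>1$ or $(n-1)q<0$ --- all of which are accurate.
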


\begin{proof}
The assertion follows from Theorem \ref{2.3}, for$\ f(x)=x^{n},\ x\in 
%TCIMACRO{\U{211d} }%
%BeginExpansion
\mathbb{R}
%EndExpansion
,\ n\in 
%TCIMACRO{\U{2124} }%
%BeginExpansion
\mathbb{Z}
%EndExpansion
,\ \left\vert n\right\vert \geq 2.$
\end{proof}

\begin{proposition}
Let $a,b\in 
%TCIMACRO{\U{211d} }%
%BeginExpansion
\mathbb{R}
%EndExpansion
$ with $a<b,\ 0\notin \left[ a,b\right] .$ Then, for $\alpha ,\lambda \in %
\left[ 0,1\right] $ and $q\geq 1,$ we have the following inequality:%
\begin{eqnarray*}
&&\left\vert \lambda H_{\alpha }^{-1}\left( a,b\right) +\left( 1-\lambda
\right) A_{\alpha }^{-1}\left( a,b\right) -L^{-1}\left( a,b\right)
\right\vert \\
&\leq &\left\{ 
\begin{array}{cc}
\begin{array}{c}
\left( b-a\right) \left\{ \gamma _{2}^{1-\frac{1}{q}}\left( \mu _{1}\frac{1}{%
\left\vert b\right\vert ^{2q}}+\mu _{2}\frac{1}{\left\vert a\right\vert ^{2q}%
}\right) ^{\frac{1}{q}}\right. \\ 
\left. +\upsilon _{2}^{1-\frac{1}{q}}\left( \eta _{3}\frac{1}{\left\vert
b\right\vert ^{2q}}+\eta _{4}\frac{1}{\left\vert a\right\vert ^{2q}}\right)
^{\frac{1}{q}}\right\} ,%
\end{array}
& \alpha \lambda \leq 1-\alpha \leq 1-\lambda \left( 1-\alpha \right) \\ 
\begin{array}{c}
\left( b-a\right) \left\{ \gamma _{2}^{1-\frac{1}{q}}\left( \mu _{1}\frac{1}{%
\left\vert b\right\vert ^{2q}}+\mu _{2}\frac{1}{\left\vert a\right\vert ^{2q}%
}\right) ^{\frac{1}{q}}\right. \\ 
\left. +\upsilon _{1}^{1-\frac{1}{q}}\left( \eta _{1}\frac{1}{\left\vert
b\right\vert ^{2q}}+\eta _{2}\frac{1}{\left\vert a\right\vert ^{2q}}\right)
^{\frac{1}{q}}\right\} ,%
\end{array}
& \alpha \lambda \leq 1-\lambda \left( 1-\alpha \right) \leq 1-\alpha \\ 
\begin{array}{c}
\left( b-a\right) \left\{ \gamma _{1}^{1-\frac{1}{q}}\left( \mu _{3}\frac{1}{%
\left\vert b\right\vert ^{2q}}+\mu _{4}\frac{1}{\left\vert a\right\vert ^{2q}%
}\right) ^{\frac{1}{q}}\right. \\ 
\left. +\upsilon _{2}^{1-\frac{1}{q}}\left( \eta _{3}\frac{1}{\left\vert
b\right\vert ^{2q}}+\eta _{4}\frac{1}{\left\vert a\right\vert ^{2q}}\right)
^{\frac{1}{q}}\right\} ,%
\end{array}
& 1-\alpha \leq \alpha \lambda \leq 1-\lambda \left( 1-\alpha \right)%
\end{array}%
\right. ,
\end{eqnarray*}%
where $\gamma _{1},\ \gamma _{2},\ \upsilon _{1},\ \ \upsilon _{2},$ $\mu
_{1},\ \mu _{2},\ \mu _{3},\ \mu _{4},\ \eta _{1},\ \eta _{2},\ \eta _{3},\
\eta _{4}$ numbers are defined as in (\ref{2-2a})-(\ref{2-2e}).
\end{proposition}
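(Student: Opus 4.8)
The plan is to invoke Theorem \ref{2.2} with the specific choice $f(x)=1/x$, whose boundary and interior values reproduce the harmonic, arithmetic, and logarithmic means appearing in the statement. Since the preceding two propositions were deduced from Theorems \ref{2.2} and \ref{2.3} by taking $f(x)=x^{n}$, and the present proposition carries the hypothesis $q\geq 1$, it should likewise follow from Theorem \ref{2.2} alone.

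First I would check that the three terms on the left-hand side of (\ref{2-2}) specialize correctly. With $f(x)=1/x$ the weighted combination becomes $\alpha f(a)+(1-\alpha)f(b)=\frac{\alpha}{a}+\frac{1-\alpha}{b}=H_{\alpha}^{-1}(a,b)$ by the definition of the weighted harmonic mean, while $f(\alpha a+(1-\alpha)b)=\frac{1}{\alpha a+(1-\alpha)b}=A_{\alpha}^{-1}(a,b)$ by the definition of the weighted arithmetic mean. For the integral term, $\frac{1}{b-a}\int_{a}^{b}\frac{dx}{x}=\frac{\ln|b|-\ln|a|}{b-a}=L^{-1}(a,b)$. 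Together these reproduce exactly the quantity $\left|\lambda H_{\alpha}^{-1}(a,b)+(1-\lambda)A_{\alpha}^{-1}(a,b)-L^{-1}(a,b)\right|$ on the left of the claimed inequality.

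Next I would feed the derivative data into the right-hand side. Since $f'(x)=-1/x^{2}$, we have $|f'(x)|^{q}=|x|^{-2q}$, hence $|f'(a)|^{q}=1/|a|^{2q}$ and $|f'(b)|^{q}=1/|b|^{2q}$. Substituting these two values into the three-case bound of (\ref{2-2}) yields verbatim the three-case bound of the proposition, with the coefficients $\gamma_{i},\upsilon_{i},\mu_{i},\eta_{i}$ carried over unchanged from (\ref{2-2a})--(\ref{2-2e}). Thus, once Theorem \ref{2.2} is known to apply, the remaining content is a direct transcription.

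The one point needing care, and the main obstacle, is verifying the hypothesis of Theorem \ref{2.2} that $|f'|^{q}$ be convex on $[a,b]$. Writing $g(x)=|x|^{-2q}$, on any interval avoiding the origin the sign of $x$ is constant and one computes $g''(x)=2q(2q+1)|x|^{-2q-2}>0$, so $g$ is convex there. This is precisely where the standing assumption $0\notin[a,b]$ enters, and it is the only genuine verification required; everything else reduces to the substitution described above.
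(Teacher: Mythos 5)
Your proposal is correct and follows exactly the paper's route: the paper's own proof is the one-line statement that the assertion follows from Theorem \ref{2.2} with $f(x)=\frac{1}{x}$, $x\in \mathbb{R}\backslash \left\{ 0\right\}$, and your write-up simply makes explicit the mean identifications, the substitution $\left\vert f^{\prime }(x)\right\vert ^{q}=\left\vert x\right\vert ^{-2q}$, and the convexity check on an interval avoiding the origin that the paper leaves tacit.
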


\begin{proof}
The assertion follows from Theorem \ref{2.2}, for$\ f(x)=\frac{1}{x},\ x\in 
%TCIMACRO{\U{211d} }%
%BeginExpansion
\mathbb{R}
%EndExpansion
\backslash \left\{ 0\right\} .$
\end{proof}

\begin{proposition}
Let $a,b\in 
%TCIMACRO{\U{211d} }%
%BeginExpansion
\mathbb{R}
%EndExpansion
$ with $0<a<b.$ Then, for $\alpha ,\lambda \in \left[ 0,1\right] $ and $q>1,$
we have the following inequality:%
\begin{equation*}
\left\vert \lambda H_{\alpha }^{-1}\left( a,b\right) +\left( 1-\lambda
\right) A_{\alpha }^{-1}\left( a,b\right) -L^{-1}\left( a,b\right)
\right\vert \leq \left( b-a\right) \left( \frac{1}{p+1}\right) ^{\frac{1}{p}}
\end{equation*}%
\begin{equation*}
\times \left\{ 
\begin{array}{cc}
\left[ \left( 1-\alpha \right) ^{\frac{1}{q}}\varepsilon _{1}^{\frac{1}{p}%
}\theta _{3}+\alpha ^{\frac{1}{q}}\varepsilon _{3}^{\frac{1}{p}}\theta _{4}%
\right] , & \alpha \lambda \leq 1-\alpha \leq 1-\lambda \left( 1-\alpha
\right)  \\ 
\left[ \left( 1-\alpha \right) ^{\frac{1}{q}}\varepsilon _{1}^{\frac{1}{p}%
}\theta _{3}+\alpha ^{\frac{1}{q}}\varepsilon _{4}^{\frac{1}{p}}\theta _{4}%
\right] , & \alpha \lambda \leq 1-\lambda \left( 1-\alpha \right) \leq
1-\alpha  \\ 
\left[ \left( 1-\alpha \right) ^{\frac{1}{q}}\varepsilon _{2}^{\frac{1}{p}%
}\theta _{3}+\alpha ^{\frac{1}{q}}\varepsilon _{3}^{\frac{1}{p}}\theta _{4}%
\right] , & 1-\alpha \leq \alpha \lambda \leq 1-\lambda \left( 1-\alpha
\right) 
\end{array}%
\right. ,
\end{equation*}%
where 
\begin{equation*}
\theta _{3}=H^{-\frac{1}{q}}\left( A_{\alpha }^{2q}\left( a,b\right)
,a^{2q}\right) ,\ \theta _{4}=H^{-\frac{1}{q}}\left( A_{\alpha }^{2q}\left(
a,b\right) ,b^{2q}\right) ,\ \frac{1}{p}+\frac{1}{q}=1,
\end{equation*}%
and $\varepsilon _{1},\ \varepsilon _{2},\ \varepsilon _{3},\ \varepsilon
_{4}$ numbers are defined as in (\ref{2-12a}).
\end{proposition}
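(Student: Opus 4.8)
The plan is to specialize Theorem~\ref{2.3} to the function $f(x)=1/x$ on $[a,b]$ with $0<a<b$, mirroring how the preceding propositions specialize the main theorems to $f(x)=x^{n}$ and $f(x)=1/x$. First I would verify the hypotheses: $f$ is differentiable on $(a,b)$ with $f'(x)=-1/x^{2}$, so that $|f'(x)|^{q}=1/x^{2q}$; since $x\mapsto x^{-2q}$ has strictly positive second derivative on $(0,\infty)$, the function $|f'|^{q}$ is convex on $[a,b]$, and Theorem~\ref{2.3} applies for every $q>1$.

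Next I would identify the three terms on the left-hand side of \eqref{2-12}. Writing $\alpha a+(1-\alpha)b=A_{\alpha}(a,b)$, one computes $\alpha f(a)+(1-\alpha)f(b)=\alpha/a+(1-\alpha)/b=H_{\alpha}^{-1}(a,b)$, then $f(\alpha a+(1-\alpha)b)=A_{\alpha}^{-1}(a,b)$, and finally $\frac{1}{b-a}\int_{a}^{b}\frac{dx}{x}=\frac{\ln b-\ln a}{b-a}=L^{-1}(a,b)$. Hence the left-hand side of \eqref{2-12} reduces exactly to $\left|\lambda H_{\alpha}^{-1}(a,b)+(1-\lambda)A_{\alpha}^{-1}(a,b)-L^{-1}(a,b)\right|$, which is the left-hand side of the claim.

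It then remains to rewrite the quantities $\delta_{1},\delta_{2}$ of \eqref{2-12a}. Since $(1-\alpha)b+\alpha a=A_{\alpha}(a,b)$ and $|f'(x)|^{q}=x^{-2q}$, I get $\delta_{1}=\tfrac{1}{2}\left(A_{\alpha}^{-2q}(a,b)+a^{-2q}\right)$ and $\delta_{2}=\tfrac{1}{2}\left(A_{\alpha}^{-2q}(a,b)+b^{-2q}\right)$. The key observation is that the reciprocal of the unweighted harmonic mean satisfies $[H(x,y)]^{-1}=\tfrac{1}{2}(1/x+1/y)$; applying this with $x=A_{\alpha}^{2q}(a,b)$ and $y=a^{2q}$ (respectively $y=b^{2q}$) gives $\delta_{1}=[H(A_{\alpha}^{2q}(a,b),a^{2q})]^{-1}$ and $\delta_{2}=[H(A_{\alpha}^{2q}(a,b),b^{2q})]^{-1}$, so that $\delta_{1}^{1/q}=\theta_{3}$ and $\delta_{2}^{1/q}=\theta_{4}$. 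Substituting these identifications into the right-hand side of \eqref{2-12} delivers the stated inequality verbatim.

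There is no real obstacle here: the argument is a direct substitution, and the $\varepsilon_{i}$ carry over unchanged because they do not depend on $f$. The only step needing care is recognizing the harmonic-mean structure of $\delta_{1}$ and $\delta_{2}$, which is precisely what recasts the generic bound of Theorem~\ref{2.3} in the mean-theoretic form $\theta_{3},\theta_{4}$. I would also note that, in contrast with the $f(x)=x^{n}$ case, no scalar prefactor such as $|n|$ appears, because the leading coefficient of $|f'|=x^{-2}$ equals $1$.
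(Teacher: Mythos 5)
Your proposal is correct and coincides with the paper's own (one-line) proof, which likewise obtains the result by applying Theorem~\ref{2.3} to $f(x)=1/x$; you have merely supplied the routine verifications (convexity of $|f'|^{q}=x^{-2q}$, the identification of the left-hand side with $\lambda H_{\alpha }^{-1}+(1-\lambda )A_{\alpha }^{-1}-L^{-1}$, and the rewriting $\delta _{1}^{1/q}=\theta _{3}$, $\delta _{2}^{1/q}=\theta _{4}$ via $H^{-1}(x,y)=\tfrac{1}{2}(1/x+1/y)$) that the paper leaves implicit, and all of them check out.
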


\begin{proof}
The assertion follows from Theorem \ref{2.3}, for$\ f(x)=\frac{1}{x}$,$\
x\in 
%TCIMACRO{\U{211d} }%
%BeginExpansion
\mathbb{R}
%EndExpansion
\backslash \left\{ 0\right\} .$
\end{proof}

\begin{proposition}
Let $a,b\in 
%TCIMACRO{\U{211d} }%
%BeginExpansion
\mathbb{R}
%EndExpansion
$ with $0<a<b.$ Then, for $\alpha ,\lambda \in \left[ 0,1\right] $ and $%
q\geq 1,$ we have the following inequality:%
\begin{eqnarray*}
&&\left\vert A_{\lambda }\left( \ln G_{\alpha }(a,b),\ln A_{\alpha
}(a,b)\right) -\ln I(a,b)\right\vert \\
&\leq &\left\{ 
\begin{array}{cc}
\begin{array}{c}
\left( b-a\right) \left\{ \gamma _{2}^{1-\frac{1}{q}}\left( \mu _{1}\frac{1}{%
b^{q}}+\mu _{2}\frac{1}{a^{q}}\right) ^{\frac{1}{q}}\right. \\ 
\left. +\upsilon _{2}^{1-\frac{1}{q}}\left( \eta _{3}\frac{1}{b^{q}}+\eta
_{4}\frac{1}{a^{q}}\right) ^{\frac{1}{q}}\right\} ,%
\end{array}
& \alpha \lambda \leq 1-\alpha \leq 1-\lambda \left( 1-\alpha \right) \\ 
\begin{array}{c}
\left( b-a\right) \left\{ \gamma _{2}^{1-\frac{1}{q}}\left( \mu _{1}\frac{1}{%
b^{q}}+\mu _{2}\frac{1}{a^{q}}\right) ^{\frac{1}{q}}\right. \\ 
\left. +\upsilon _{1}^{1-\frac{1}{q}}\left( \eta _{1}\frac{1}{b^{q}}+\eta
_{2}\frac{1}{a^{q}}\right) ^{\frac{1}{q}}\right\} ,%
\end{array}
& \alpha \lambda \leq 1-\lambda \left( 1-\alpha \right) \leq 1-\alpha \\ 
\begin{array}{c}
\left( b-a\right) \left\{ \gamma _{1}^{1-\frac{1}{q}}\left( \mu _{3}\frac{1}{%
b^{q}}+\mu _{4}\frac{1}{a^{q}}\right) ^{\frac{1}{q}}\right. \\ 
\left. +\upsilon _{2}^{1-\frac{1}{q}}\left( \eta _{3}\frac{1}{b^{q}}+\eta
_{4}\frac{1}{a^{q}}\right) ^{\frac{1}{q}}\right\} ,%
\end{array}
& 1-\alpha \leq \alpha \lambda \leq 1-\lambda \left( 1-\alpha \right)%
\end{array}%
\right.
\end{eqnarray*}%
where $\gamma _{1},\ \gamma _{2},\ \upsilon _{1},\ \ \upsilon _{2},$ $\mu
_{1},\ \mu _{2},\ \mu _{3},\ \mu _{4},\ \eta _{1},\ \eta _{2},\ \eta _{3},\
\eta _{4}$ numbers are defined as in (\ref{2-2a})-(\ref{2-2e}).
\end{proposition}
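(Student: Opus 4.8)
The plan is to apply Theorem \ref{2.2} to the single function $f(x)=\ln x$ on $[a,b]$ with $0<a<b$, and then rewrite each term produced by the theorem in the language of the special means listed above. This mirrors the strategy of the preceding propositions, where the choices $f(x)=x^{n}$ and $f(x)=1/x$ were used.

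First I would check that $f(x)=\ln x$ satisfies the hypotheses of Theorem \ref{2.2}. It is differentiable on $(0,\infty )$ with $f^{\prime }(x)=1/x$, and since $\left\vert f^{\prime }(x)\right\vert ^{q}=x^{-q}$ has second derivative $q(q+1)x^{-q-2}>0$ for $x>0$, the function $\left\vert f^{\prime }\right\vert ^{q}$ is convex on $[a,b]$ for every $q\geq 1$. Thus the theorem applies.

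Next I would identify the left-hand side of (\ref{2-2}) under this choice of $f$. Since $\alpha \ln a+(1-\alpha )\ln b=\ln \left( a^{\alpha }b^{1-\alpha }\right) =\ln G_{\alpha }(a,b)$ and $\ln (\alpha a+(1-\alpha )b)=\ln A_{\alpha }(a,b)$, while the integral mean evaluates to $\frac{1}{b-a}\int_{a}^{b}\ln x\,dx=\frac{b\ln b-a\ln a}{b-a}-1=\ln I(a,b)$, the quantity inside the absolute value in (\ref{2-2}) becomes $\lambda \ln G_{\alpha }(a,b)+(1-\lambda )\ln A_{\alpha }(a,b)-\ln I(a,b)$, which is precisely $A_{\lambda }\left( \ln G_{\alpha }(a,b),\ln A_{\alpha }(a,b)\right) -\ln I(a,b)$, the left-hand side of the proposition.

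Finally I would substitute the derivative values into the right-hand side of (\ref{2-2}). Because $a,b>0$ we have $\left\vert f^{\prime }(a)\right\vert ^{q}=1/a^{q}$ and $\left\vert f^{\prime }(b)\right\vert ^{q}=1/b^{q}$, so replacing $\left\vert f^{\prime }(b)\right\vert ^{q}$ by $1/b^{q}$ and $\left\vert f^{\prime }(a)\right\vert ^{q}$ by $1/a^{q}$ throughout the three cases of (\ref{2-2}) reproduces the claimed bound verbatim, with the coefficients $\gamma _{1},\gamma _{2},\upsilon _{1},\upsilon _{2},\mu _{1},\ldots ,\eta _{4}$ inherited unchanged from (\ref{2-2a})--(\ref{2-2e}). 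The only step requiring any computation is the identric-mean identity $\frac{1}{b-a}\int_{a}^{b}\ln x\,dx=\ln I(a,b)$, which is immediate from $\int_{a}^{b}\ln x\,dx=b\ln b-a\ln a-(b-a)$ together with the definition $I(a,b)=e^{-1}\left( b^{b}/a^{a}\right) ^{1/(b-a)}$; there is no genuine obstacle beyond this verification.
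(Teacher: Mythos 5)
Your proof is correct and takes essentially the same approach as the paper, which simply invokes Theorem~\ref{2.2} with $f(x)=-\ln x$; your choice $f(x)=\ln x$ is an immaterial sign variant, since only $\left\vert f^{\prime }\right\vert ^{q}=x^{-q}$ and the absolute value of the left-hand side enter the argument. The identifications $\lambda \ln G_{\alpha }(a,b)+(1-\lambda )\ln A_{\alpha }(a,b)=A_{\lambda }\left( \ln G_{\alpha }(a,b),\ln A_{\alpha }(a,b)\right) $ and $\frac{1}{b-a}\int_{a}^{b}\ln x\,dx=\ln I(a,b)$ that you verify explicitly are exactly the computations the paper leaves implicit.
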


\begin{proof}
The assertion follows from Theorem \ref{2.2}, for$\ f(x)=-\ln x,\ x>0.$
\end{proof}

\begin{proposition}
Let $a,b\in 
%TCIMACRO{\U{211d} }%
%BeginExpansion
\mathbb{R}
%EndExpansion
$ with $0<a<b.$ Then, for $\alpha ,\lambda \in \left[ 0,1\right] $ and $q>1,$
we have the following inequality:%
\begin{equation*}
\left\vert A_{\lambda }\left( \ln G_{\alpha }(a,b),\ln A_{\alpha
}(a,b)\right) -\ln I(a,b)\right\vert \leq \left( b-a\right) \left( \frac{1}{%
p+1}\right) ^{\frac{1}{p}}
\end{equation*}%
\begin{equation*}
\times \left\{ 
\begin{array}{cc}
\left[ \left( 1-\alpha \right) ^{\frac{1}{q}}\varepsilon _{1}^{\frac{1}{p}%
}\theta _{3}+\alpha ^{\frac{1}{q}}\varepsilon _{3}^{\frac{1}{p}}\theta _{4}%
\right] , & \alpha \lambda \leq 1-\alpha \leq 1-\lambda \left( 1-\alpha
\right) \\ 
\left[ \left( 1-\alpha \right) ^{\frac{1}{q}}\varepsilon _{1}^{\frac{1}{p}%
}\theta _{3}+\alpha ^{\frac{1}{q}}\varepsilon _{4}^{\frac{1}{p}}\theta _{4}%
\right] , & \alpha \lambda \leq 1-\lambda \left( 1-\alpha \right) \leq
1-\alpha \\ 
\left[ \left( 1-\alpha \right) ^{\frac{1}{q}}\varepsilon _{2}^{\frac{1}{p}%
}\theta _{3}+\alpha ^{\frac{1}{q}}\varepsilon _{3}^{\frac{1}{p}}\theta _{4}%
\right] , & 1-\alpha \leq \alpha \lambda \leq 1-\lambda \left( 1-\alpha
\right)%
\end{array}%
\right. ,
\end{equation*}%
where 
\begin{equation*}
\theta _{3}=H^{-\frac{1}{q}}\left( A_{\alpha }^{q}\left( a,b\right)
,a^{q}\right) ,\ \theta _{4}=H^{-\frac{1}{q}}\left( A_{\alpha }^{q}\left(
a,b\right) ,b^{q}\right) ,\ \frac{1}{p}+\frac{1}{q}=1,
\end{equation*}%
and $\varepsilon _{1},\ \varepsilon _{2},\ \varepsilon _{3},\ \varepsilon
_{4}$ numbers are defined as in (\ref{2-12a}).
\end{proposition}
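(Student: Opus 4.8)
The plan is to specialize Theorem~\ref{2.3} to the function $f(x)=-\ln x$ on $[a,b]$ with $0<a<b$, and then to recognize each resulting quantity as one of the stated special means. First I would verify the hypotheses: $f$ is differentiable on $(a,b)$ with $f^{\prime }(x)=-1/x\in L[a,b]$ (as $0\notin \lbrack a,b]$), and $\left\vert f^{\prime }(x)\right\vert ^{q}=x^{-q}$ has second derivative $q(q+1)x^{-q-2}>0$ on $[a,b]$, so $\left\vert f^{\prime }\right\vert ^{q}$ is convex there for every $q>1$. Thus Theorem~\ref{2.3} applies.

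Next I would rewrite the left-hand side of (\ref{2-12}). Using $\alpha f(a)+(1-\alpha )f(b)=-\alpha \ln a-(1-\alpha )\ln b=-\ln \left( a^{\alpha }b^{1-\alpha }\right) =-\ln G_{\alpha }(a,b)$ and $f(\alpha a+(1-\alpha )b)=-\ln A_{\alpha }(a,b)$, together with the standard identric-mean identity
\begin{equation*}
\frac{1}{b-a}\dint\limits_{a}^{b}(-\ln x)\,dx=-\left[ \frac{b\ln b-a\ln a}{b-a}-1\right] =-\ln I(a,b),
\end{equation*}
the bracketed combination inside the absolute value becomes $-\lambda \ln G_{\alpha }(a,b)-(1-\lambda )\ln A_{\alpha }(a,b)+\ln I(a,b)$, whose modulus is exactly $\left\vert A_{\lambda }\left( \ln G_{\alpha }(a,b),\ln A_{\alpha }(a,b)\right) -\ln I(a,b)\right\vert $, the left side of the proposition.

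Then I would transform the right-hand side. Since $(1-\alpha )b+\alpha a=A_{\alpha }(a,b)$ and $\left\vert f^{\prime }\right\vert =1/x$, the quantities of (\ref{2-12a}) become $\delta _{1}=\tfrac{1}{2}\left( A_{\alpha }^{-q}(a,b)+a^{-q}\right) $ and $\delta _{2}=\tfrac{1}{2}\left( A_{\alpha }^{-q}(a,b)+b^{-q}\right) $. The key observation is that $\tfrac{1}{2}\left( X^{-1}+Y^{-1}\right) =H^{-1}(X,Y)$ for the unweighted harmonic mean, which gives $\delta _{1}^{1/q}=H^{-1/q}\left( A_{\alpha }^{q}(a,b),a^{q}\right) =\theta _{3}$ and $\delta _{2}^{1/q}=H^{-1/q}\left( A_{\alpha }^{q}(a,b),b^{q}\right) =\theta _{4}$. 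The constants $\varepsilon _{1},\dots ,\varepsilon _{4}$ depend only on $\alpha ,\lambda ,p$ and are carried over verbatim, and the three case hypotheses coincide with those in Theorem~\ref{2.3}; note that $\left\vert f^{\prime }\right\vert =1/x$ produces no multiplicative constant, so (unlike the power-function proposition) no factor $\left\vert n\right\vert $ appears. Substituting these identifications into (\ref{2-12}) yields the claimed inequality.

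The only non-mechanical steps are the two mean-identities: recognizing $\frac{1}{b-a}\int_{a}^{b}(-\ln x)\,dx$ as $-\ln I(a,b)$, and rewriting the arithmetic means of reciprocals $\delta _{1},\delta _{2}$ as reciprocal harmonic means $\theta _{3},\theta _{4}$. Once these are in place the argument is a direct substitution into Theorem~\ref{2.3}, requiring no further estimation.
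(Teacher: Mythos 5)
Your proposal is correct and is exactly the paper's argument: the paper proves this proposition by the single line ``the assertion follows from Theorem \ref{2.3} for $f(x)=-\ln x$, $x>0$,'' and your write-up simply supplies the (accurate) details of that specialization, including the verification that $\left\vert f^{\prime }(x)\right\vert ^{q}=x^{-q}$ is convex, the identification of the left-hand side via $\ln G_{\alpha }$, $\ln A_{\alpha }$, $\ln I$, and the rewriting $\delta _{1}^{1/q}=\theta _{3}$, $\delta _{2}^{1/q}=\theta _{4}$ through $H^{-1}(X,Y)=\tfrac{1}{2}\left( X^{-1}+Y^{-1}\right) $.
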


\begin{proof}
The assertion follows from Theorem \ref{2.3}, for$\ f(x)=-\ln x,\ x>0.$
\end{proof}

\end{document}